\theoremstyle{change}
\newtheorem{thm}{Theorem}[section]
\newtheorem{prop}{Proposition}[section]
\newtheorem{lemma}{Lemma}[section]
\theoremstyle{definition}
\newtheorem{cor}{Corollary}[section]
\newtheorem{example}{Example}[section]
\def\W{\mathcal W}
\def\F{\mathcal F}
\def\A{\textsf A}
\def\R{\textsf R}
\def\B{\textsf B}
\def\K{\textsf K}
\def\PP{\textsf P}
\def\C{\mathbb C}
\def\P{\mathbb P}
\def\FF{\textsf F}
\def\M{\mathfrak M}
\DeclareMathOperator{\Log}{Log}
\DeclareMathOperator{\Rat}{Rat}
\DeclareMathOperator{\rank}{rank}
\DeclareMathOperator{\Hom}{Hom}
\DeclareMathOperator{\Char}{Char}
\begin{document}
\title{Resonance Webs of Hyperplane Arrangements}
\author{Jorge Vit\'orio Pereira}
\address{IMPA, Estrada
Dona  Castorina, 110\\
22460-320, Rio de Janeiro, RJ, Brazil}
\email{jvp@impa.br}
\date{\today}

\begin{abstract} Each irreducible component of the first resonance variety of a
hyperplane arrangement  naturally determines a codimension one foliation on the ambient space.
The superposition of  these foliations define what we call the resonance  web of the arrangement.
In this paper we initiate the study of these objects  with emphasis on their spaces of abelian relations.
\end{abstract}

\maketitle
\date{}

\section{Introduction}
Let  $\A = \{ H_1, \ldots, H_r\}$ be an arrangement of $r\ge 1$ hyperplanes in $\P^n$.  The complement of $\A$ is an affine variety that will be denoted
by $M=M(\A)$.  It is a result of Arnold \cite{Arnold} ( for the braid arrangement ) and Brieskorn \cite{Brie} ( for and arbitrary hyperplane arrangement )
that the cohomology ring of $M$,  $H^\bullet(M, \mathbb Z)$, is torsion free and generated, as a graded algebra, by the degree one elements
determined by the classes of the logarithmic differential forms
\[
(2 \pi i)^{-1} \left(d \log \frac{h_i}{h_r} \right) \quad  \text{ for } i \in  \{ 1, \ldots, r-1\} \,
\]
where  $h_1, \ldots, h_r$  are  linear polynomials in $\mathbb C[x_0, \ldots, x_n]$ defining the hyperplanes in $\A$.

\smallskip
Given $a \in H^1(M) = H^1(M,\mathbb C)$, consider the complex $(H^{\bullet}(M), a)$ with arrows given by multiplication by $a$:
\[
\xymatrixcolsep{1.2pc} \xymatrix{
0 \ar[r] &H^0(M) \ar[r] &H^1(M) \ar[r] &H^2(M)    \ar[r] &\cdots  \ar[r] &H^n(M) \ar[r] &0\, .
}
\]
The  resonance varieties of $M$, or $\A$,  are defined as
\[
\R^i(M) = \R^i(\A) = \{ a \in H^1(M) ,\ | \, h^i(H^{\bullet}(M),a) \neq 0 \} \, .
\]

The paper \cite{multinets} provides the following nice description of the first resonance variety $\R^1(M)$.  The irreducible components of $\R^1(M)$ are precisely the maximal isotropic subspaces of $H^1(M)$  for the quadratic form
\[
\wedge : H^1(M, \mathbb C) \otimes H^1(M,\mathbb C) \longrightarrow H^2(M,\mathbb C) \,
\]
having dimension at least two.
Moreover, the irreducible components of $\R^1(M)$ of dimension $k$ are in correspondence with pencils of hypersurfaces
on $\P^n$ having exactly $k+1$ elements with support contained in the arrangement.
In particular, for each irreducible component $\Sigma$ of  the resonance variety there is a unique (singular holomorphic) foliation $\mathcal F_{\Sigma}$  on $\P^n$ defined by the corresponding pencil of hypersurfaces.

\smallskip

The interest of the study of the resonance varieties of a hyperplane arrangement is amplified by its relation with the cohomology jumping loci of rank one local systems
on $M$. The  characteristic variety $\Char^i(M)$ of $M$ is the subvariety of $\Hom(\pi_1(M), \mathbb C^*)$ defined as
\[
\Char^i(M) = \{ \rho \in \Hom(\pi_1(M), \mathbb C^*) \, | \, h^i(M, \mathbb C_{\rho} ) \neq 0  \}  \, ,
\]
where $\C_{\rho}$ is the rank one local  system determined by $\rho$.
The above mentioned relation   is given by the following Theorem \cite{Arapura,CohenSuciu}: the exponential map
\begin{align*}
\exp : H^1(M) &\longrightarrow \Hom(\pi_1(M), \mathbb C^*) \\
a &\longmapsto \left( \gamma \mapsto \exp \left(2\pi i \int_{\gamma} a \right) \right)
\end{align*}
defines  isomorphisms between the germs $(\R^i(M),0)$ and   $(\Char^i(M),\mathbf 1)$, where $\mathbf 1 \in \Hom(\pi_1(M), \mathbb C^*)$ is the trivial
representation.
\smallskip

The study of the foliation $\F_{\Sigma}$ for irreducible components $\Sigma \subset \R^1(M)$  led  the author and
S. Yuzvinsky (see \cite{PY}) to   bounds for the dimension of $\Sigma$. Although there
is now (especially after \cite{YS}) a reasonably clear picture about each of the irreducible components of $\R^1(M)$, it is not very clear how
the totality of them sit inside $H^1(M, \C)$. In this paper we propose an approach to produce
invariants  for  arrangements that may turn out to be useful to the study of this question.
The underlying idea is fairly simple: instead of looking at  the foliations associated to
the irreducible components of $\R^1(M)$ one at a time, we should look at all of them at the same time.
More precisely, we will associate to an arrangement $\A$ what we call its resonance web $\W({\A})$ -- the superposition of all the foliations $\F_{\Sigma}$ associate to irreducible components of $\R^1(\A)$ --
and will study its space of abelian relations.

\smallskip

Conversely, many relevant examples for  web geometry, specially in what concerns the dimension of the space of
abelian relations,   appear as  resonance webs of certain arrangements.
The list starts with the Bol exceptional $5$-web \cite{bol36},  contains the Spence-Kummer exceptional $9$-web\cite{Robert,PirioSel},
and ends with some other exceptional webs presented in \cite{Robert,PirioSel}.
This provides further motivation to pursue the study of resonance webs.

\medskip

Our main result  is Theorem \ref{T:1}  which determines  the rank of the
resonance webs for the braid arrangements which, as we will see in Section \ref{S:main}, correspond to
the  $\binom{n+3}{4}$-web on $\M_{0,n+3}$ (the moduli space of $n+3$ distinct ordered points on
$\P^1$) defined by the $\binom{n+3}{4}$ natural maps
\[
\M_{0,n+3} \longrightarrow \M_{0,4} \simeq \mathbb C \setminus \{ 0, 1\} \, .
\]
We will also draw some general considerations about the
abelian relations of resonance webs and use them to  study some of the examples of exceptional webs mentioned above.
Although we have no major results on the structure of the space of abelian relations of resonance webs for arbitrary
arrangements, the blurry picture delineated by   these examples is considerably intricate and, we believe, invites further investigation.

\medskip

\noindent{{\bf Plan of the paper.}} In Section \ref{S:web} we define webs, their spaces of abelian relations, and show how to bound
the rank of arbitrary codimension one webs. We also   review the algebraization results for webs of maximal rank, define exceptional webs, and present Bol's example
of exceptional planar $5$-web. In Section \ref{S:poly} we define  resonance webs and initiate the study of their spaces of abelian relations, more specifically
the subspace of polylogarithmic abelian relations generated by collections of  iterated integrals of logarithmic $1$-forms with poles on the arrangement. The reader will
also find in Section \ref{S:poly}  a brief presentation of a couple of basic results from Chen's theory of iterated integrals relevant to our study.
Section \ref{S:main} is devoted to the statement and proof of our main result: the determination of the rank of the resonance webs of the braid arrangements.
 Section \ref{S:examples} studies some of the exceptional planar webs  found by  Pirio and Robert as  resonance webs  of   line arrangements in $\P^2$.

\smallskip

\section{Web geometry}\label{S:web}

For us a {\bf germ of  codimension one  $k$-web} $\mathcal
W=\mathcal F_1\boxtimes
\cdots \boxtimes \mathcal F_k$ on $(\mathbb
C^n,0)$ is a collection of $k$ germs of smooth codimension one
holomorphic foliations  subjected to the condition that  any two distinct foliations $\F_i, \F_j$ have distinct tangent spaces
at zero.

\smallskip

Usually in the literature a stronger condition is imposed on the tangent spaces at zero. In
the terminology of \cite{CL} the tangent spaces are usually assumed to be in  {\bf strong general position}, meaning that
for any $1\le m\le n$ the intersection of tangent spaces at zero of $m$ distinct foliations $\mathcal F_i$ have
codimension $m$.

Perhaps the most studied invariant of a germ of codimension one web $\mathcal W$ is its {\bf space of abelian relations}
$\mathcal A(\W)$.  If we chose  integrable $1$-forms $\omega_i$ inducing the foliations
$\mathcal F_i$ then $\mathcal A(\mathcal W)$ is equal to
\[
 \left\lbrace
{\big(\eta_i\big)}_{i=1}^k \in (\Omega^1(\mathbb C^n,0))^k \, \,
\Big| \, \,\forall i \, \, d\eta_i =0 \,, \; \eta_i\wedge \omega_i=0
\, \, \text{ and } \sum_{i=1}^k\eta_i = 0 \right\rbrace.
\]

If $u_i:(\mathbb C^n,0) \to (\mathbb C,0)$ are local submersions
defining the foliations $\mathcal F_i$ then, after integration, the
abelian relations can be read as  functional equations of the form $
 \sum_{i=1}^k g_i(u_i) =0
$
for some germs of holomorphic functions $g_i: (\mathbb C,0) \to
(\mathbb C,0)$. Thus we can interpret the abelian relations of $\W$ as functional equations (of a rather special kind)
among the first integrals of the foliations defining it.

\subsection{Rank of webs}

Clearly $\mathcal A(\mathcal W)$ is a vector space and its dimension
is commonly called the {\bf rank} of $\mathcal W$ and is denoted  by
$\rank(\mathcal W)$.  We will now explain how one can bound the rank of arbitrary codimension one
webs. This is a classical subject in web geometry and has been treated by Bol ($n=2$) and Chern ($n\ge 3$ for webs in strong general position)
in the decade of 1930, and more recently by Cavalier-Lehmann for ordinary webs, see  the definition below. Here we will
deal with arbitrary codimension one webs. This section is a summary of  \cite[Section 2.2]{invitation}.

\medskip

For every $i \in \{ 1, \ldots, k\}$ let $\omega_i$ be
 a germ of $1$-form defining $\mathcal F_i$ and satisfying $\omega_i(0) \neq 0 $.
For any positive integer $j$ define $\mathcal L^j(\mathcal W)$ as the subspace of the $\mathbb C$-vector space
$\mathrm{Sym}^j ( \Omega^1(\mathbb C^n,0) )$ generated by
the $j$-th symmetric powers of the exterior forms $\omega_i(0)$ with $i \in \{ 1, \ldots, k\}$.
Set $\ell^j(\mathcal W) = \dim \mathcal L^j(\mathcal W)$.

Alternatively, if  $u_i:(\mathbb C^n,0) \to (\mathbb C,0)$ are local submersions
defining the foliations $\mathcal F_i$, and $h_i$ are their linear terms then
\begin{equation}\label{E:lj}
\ell^j(\mathcal W) = \dim  \left( \mathbb C h_1^j + \cdots + \mathbb C h_k^j \right) \, .
\end{equation}


Notice that the integer  $\ell^j(\mathcal W)$ is bounded by $k$ and by the dimension of the vector space of homogeneous polynomials
of degree $j$ in $n$ variables, i.e.
\begin{equation}\label{E:trivial}
\ell^j ( \mathcal W) \le \max\left\{ k , \binom{n+j -1}{n-1} \right\} \, .
\end{equation}
In the terminology of \cite{CL}, a germ of $k$-web $\mathcal W$ on $(\mathbb C^n,0)$ is {\bf ordinary} if and only if
$\ell^j ( \mathcal W) = \max\left\{ k , \binom{n+j -1}{n-1} \right\}$ for every positive integer $j$.

\medskip

A good lower bound  is harder to obtain. For webs in strong general position there is a lemma
by  Castelnuovo  \cite[Proposition 2.2.2]{invitation} which says that
\begin{equation}\label{E:castelnuovo}
\ell^j ( \mathcal W) \ge \min( k , j(n-1) + 1) ).
\end{equation}
For arbitrary webs, no longer in strong general position, the best possible lower  bound is
$ \ell^j ( \mathcal W) \ge \min( k , j + 1 )$.

\medskip

The  argument used to prove the proposition below is borrowed from
Tr\'{e}preau's proof \cite{Trepreau} of Chern's bound for the rank of webs in strong general position.

\begin{prop}\label{P:bound}
If $\mathcal W$ is an arbitrary   $k$-web on $(\mathbb C^n,0)$  then
\[
\mathrm{rk}(\mathcal  W )\le \sum_{j=1}^{\infty} \max( 0 , k - \ell^j(\mathcal W) )\,
\]
and the sum involves only finitely many non-zero terms.
\end{prop}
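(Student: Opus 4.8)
The plan is to bound the rank by filtering the space of abelian relations according to the vanishing order of its components. Given an abelian relation $(\eta_i)_{i=1}^k \in \mathcal A(\W)$, each $\eta_i$ is a closed $1$-form conormal to $\F_i$, hence can be written as $\eta_i = g_i'(u_i)\, du_i$ for a germ $g_i:(\C,0)\to(\C,0)$. After replacing $u_i$ by its linear part up to higher order terms, one has a well-defined \emph{order filtration} on $\mathcal A(\W)$: let $F_m \mathcal A(\W)$ consist of those abelian relations all of whose components $\eta_i$ vanish to order $\ge m-1$ at $0$ (equivalently, the $g_i$ vanish to order $\ge m$), so that $F_1 \supseteq F_2 \supseteq \cdots$ and $\bigcap_m F_m = 0$ because a closed $1$-form vanishing to infinite order is zero. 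Since each step drops the vanishing order by at least one and the web has finitely many leaves, this filtration is exhaustive and eventually stabilizes at $0$; thus $\rank(\W) = \sum_{j\ge 1} \dim\bigl( F_j \mathcal A(\W) / F_{j+1}\mathcal A(\W)\bigr)$.

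The core estimate is that $\dim\bigl(F_j \mathcal A(\W)/F_{j+1}\mathcal A(\W)\bigr) \le \max(0, k - \ell^j(\W))$. To see this, take an abelian relation in $F_j \mathcal A(\W)$; its component $\eta_i$ has the form $\eta_i = (c_i\, h_i^{j-1} + \text{higher order})\, dh_i$ for scalars $c_i$, where $h_i$ is the linear term of $u_i$. The leading term of the relation $\sum_i \eta_i = 0$ is the degree $j-1$ part, namely $\sum_i c_i\, h_i^{j-1}\, dh_i = \tfrac{1}{j}\, d\bigl(\sum_i c_i\, h_i^{j}\bigr)$. Requiring this to vanish forces $\sum_i c_i\, h_i^{j} \in \C$, hence $=0$ since it is homogeneous of positive degree; so the vector $(c_1,\dots,c_k)$ lies in the kernel of the linear map $\C^k \to \mathrm{Sym}^j(\Omega^1)$ sending the $i$-th basis vector to $h_i^{j}$ (equivalently to $h_i^{j}$ in the polynomial ring). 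By the definition \eqref{E:lj}, that map has rank $\ell^j(\W)$, so its kernel has dimension $k - \ell^j(\W)$ when this is positive and is $0$ otherwise. Since the class of an abelian relation in the graded piece $F_j/F_{j+1}$ is determined by the tuple of leading coefficients $(c_i)$ — if all $c_i$ vanish the relation lies in $F_{j+1}$ — the graded piece injects into this kernel, giving the bound.

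Summing over $j$ yields $\rank(\W) \le \sum_{j\ge1}\max(0, k - \ell^j(\W))$, and finiteness of the sum follows because $\ell^j(\W) = k$ once $\binom{n+j-1}{n-1} \ge k$, i.e. for all large $j$, by the trivial estimate that $\ell^j$ eventually reaches $k$ (the powers $h_i^j$ of $k$ pairwise non-proportional linear forms become linearly independent once the ambient space of degree-$j$ polynomials is large enough — a Vandermonde-type fact). The main obstacle, and the point requiring care, is the well-definedness of the order filtration and the claim that the graded piece is captured faithfully by the leading coefficients: one must check that choosing different submersions $u_i$ or different defining forms $\omega_i$ rescales each $h_i$ by a nonzero constant and so does not affect $\ell^j(\W)$ or the kernel dimension, and that the passage from $\eta_i$ to its leading homogeneous term is compatible with the closedness condition $d\eta_i = 0$ — which it is, since $\eta_i$ being conormal to a smooth foliation and closed makes it a function of $u_i$ alone, so its Taylor expansion in $h_i$ is legitimate. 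I expect the remaining verifications to be routine.
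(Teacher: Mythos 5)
Your argument is essentially the paper's: the same order filtration on $\mathcal A(\mathcal W)$ (shifted by one in the indexing) together with the leading-coefficient estimate $\dim F_j/F_{j+1}\le k-\ell^j(\mathcal W)$, which the paper simply quotes as \cite[Lemma 2.2.6]{invitation} and you prove inline. One small correction to your finiteness discussion: the claim that $\ell^j(\mathcal W)=k$ once $\binom{n+j-1}{n-1}\ge k$ is false in general (e.g.\ six pairwise non-proportional linear forms lying in a two-dimensional subspace of linear forms have $\ell^2=3<6$); the correct statement, obtained by restricting to a generic $2$-plane and applying the Vandermonde fact you allude to, is $\ell^j(\mathcal W)\ge\min(k,j+1)$, so $\ell^j=k$ for $j\ge k-1$, which still yields finiteness of the sum.
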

\begin{proof}
The space of abelian relations of $\mathcal W$ admits a natural filtration
$\mathcal A(\mathcal W) = \mathcal A^0(\mathcal
W) \supseteq \mathcal A^1(\mathcal W) \supseteq  \ldots \supseteq
\mathcal A^j(\mathcal W) \supseteq  \ldots$,  where
\[
{ \quad
 \mathcal A^j(\mathcal W) = \ker  \left\{ \mathcal A(\mathcal W)\longrightarrow
 \left( \frac{ \Omega^1(\mathbb C^n ,0)}{\mathfrak m^j \cdot \Omega^1(\mathbb C^n
 ,0)} \right)^k \right\}, \quad }
\]
and  $\mathfrak m$ is the maximal ideal of $\mathbb
C\{x_1,\ldots, x_n\}$.

One  can easily verify \cite[Lemma 2.2.6]{invitation} that
\begin{equation}\label{E:partialbounds}
\dim \frac{\mathcal A^j(\mathcal W)}{\mathcal A^{j+1}(\mathcal W)}
\le k - \dim \left( \mathbb C \cdot h_1^{j+1} + \cdots + \mathbb
C \cdot h_k^{j+1}\right) \,
\end{equation}
where $h_i$ is as in (\ref{E:lj}).
The bound  follows. Moreover, as $ \ell^j ( \mathcal W) \ge \min( k , j + 1 ) $ there
are only finitely many non-zero terms at the summation above.
\end{proof}

\smallskip
The proposition above combined with  the lower bounds previously discussed allows us to recover
the bounds for the rank of germs of codimension one webs  available elsewhere.

\begin{cor}\label{C:bound}
Let  $\mathcal W$ be a germ of $k$-web on $(\mathbb C^n,0)$. The  assertions below hold true.
\begin{enumerate}
\item ({\bf Bol's bound}) If $n =2$ then $$\rank(\mathcal W) \le \frac{(k-1) (k-2)}{2}.$$
\item ({\bf Chern's bound}) If $n \ge 3$ and $\mathcal W$ is in strong general position then $$\rank(\mathcal W) \le \sum_{j=1}^{\infty} \max (0 , k - j(n-1) - 1).  $$
\item ({\bf Cavalier-Lehmann's bound}) If $n \ge 3$ and $\mathcal W$ is an ordinary web then
 $$\rank(\mathcal W) \le \sum_{j=1}^{\infty} \max \left(0 , k - \binom{n+j -1}{n-1} \right).  $$
\end{enumerate}
\end{cor}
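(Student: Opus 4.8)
The plan is to derive Corollary \ref{C:bound} directly from Proposition \ref{P:bound} by substituting the appropriate lower bound for $\ell^j(\mathcal W)$ in each of the three cases, and then evaluating the resulting elementary series. The key observation is that Proposition \ref{P:bound} gives $\mathrm{rk}(\mathcal W) \le \sum_{j\ge 1} \max(0, k - \ell^j(\mathcal W))$, and this expression is monotone decreasing in each $\ell^j$: replacing $\ell^j(\mathcal W)$ by any valid lower bound for it only enlarges the right-hand side, so any such substitution yields a legitimate (if weaker) upper bound for the rank.

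For part (3), the ordinary case is the most transparent: by definition $\ell^j(\mathcal W) = \max\{k, \binom{n+j-1}{n-1}\}$, hence $k - \ell^j(\mathcal W) = \min\{0, k - \binom{n+j-1}{n-1}\}$, and therefore $\max(0, k-\ell^j(\mathcal W)) = \max(0, k - \binom{n+j-1}{n-1})$, which gives the stated formula immediately with no further work. For part (2), the web is in strong general position, so Castelnuovo's inequality \eqref{E:castelnuovo} gives $\ell^j(\mathcal W) \ge \min(k, j(n-1)+1)$; substituting this into Proposition \ref{P:bound} and noting that $k - \min(k, j(n-1)+1) = \max(0, k - j(n-1) - 1)$ after taking the outer $\max$ with $0$ yields Chern's bound. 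For part (1), with $n=2$ and any web, the best general lower bound is $\ell^j(\mathcal W) \ge \min(k, j+1)$ (this is the $n=2$ specialization of Castelnuovo, since a $1$-web on $(\mathbb C^2,0)$ is never forced below $j+1$); substituting gives $\mathrm{rk}(\mathcal W) \le \sum_{j\ge 1} \max(0, k - j - 1)$. The nonzero terms run over $j = 1, \ldots, k-2$, contributing $(k-2) + (k-3) + \cdots + 1 = \binom{k-1}{2} = \frac{(k-1)(k-2)}{2}$, which is Bol's bound.

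The only genuinely non-formal point is the summation in part (1): one must check that $\sum_{j=1}^{k-2}(k-1-j) = \frac{(k-1)(k-2)}{2}$, which is the standard arithmetic-series identity $\sum_{m=1}^{k-2} m = \frac{(k-2)(k-1)}{2}$ after reindexing $m = k-1-j$. For parts (2) and (3) no summation needs to be carried out, since the statements are left in summed form; one only needs the pointwise comparison of the summands. I do not expect any real obstacle here — the corollary is purely a matter of bookkeeping applied to Proposition \ref{P:bound}. The one place to be slightly careful is that in each case the inequality $\ell^j(\mathcal W) \ge \min(k, j+1)$ holds for all $j \ge 1$, so that Proposition \ref{P:bound} genuinely applies and all three sums have only finitely many nonzero terms; this was already noted in the proof of the proposition and requires no repetition.
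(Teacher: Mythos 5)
Your proposal is correct and is exactly the argument the paper intends: the paper's entire ``proof'' of the corollary is the one-line remark that Proposition \ref{P:bound} combined with the previously stated lower bounds for $\ell^j(\mathcal W)$ recovers the three classical bounds, and your substitutions in parts (1) and (2) together with the arithmetic-series evaluation for $n=2$ are precisely that bookkeeping.

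One step in your part (3) is, as literally written, false, and you should fix it. You take the paper's definition of ordinary at face value, $\ell^j(\mathcal W)=\max\bigl\{k,\binom{n+j-1}{n-1}\bigr\}$, correctly deduce $k-\ell^j(\mathcal W)=\min\bigl\{0,\,k-\binom{n+j-1}{n-1}\bigr\}$, and then assert that $\max\bigl(0,k-\ell^j(\mathcal W)\bigr)=\max\bigl(0,\,k-\binom{n+j-1}{n-1}\bigr)$. But $\max(0,\min(0,x))=0$ for every $x$, so under that reading every summand vanishes and you would be ``proving'' $\rank(\mathcal W)\le 0$. The resolution is that the $\max$ in equation \eqref{E:trivial} and in the definition of ordinary is a typo for $\min$: the sentence preceding \eqref{E:trivial} says $\ell^j(\mathcal W)$ is bounded by $k$ \emph{and} by $\binom{n+j-1}{n-1}$, which forces $\ell^j(\mathcal W)\le\min\bigl\{k,\binom{n+j-1}{n-1}\bigr\}$, and ordinary means equality there. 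With $\ell^j(\mathcal W)=\min\bigl\{k,\binom{n+j-1}{n-1}\bigr\}$ one gets $k-\ell^j(\mathcal W)=\max\bigl(0,\,k-\binom{n+j-1}{n-1}\bigr)$ and part (3) follows as you state. The rest of your argument, including the monotonicity remark justifying the substitution of lower bounds and the use of $\ell^j(\mathcal W)\ge\min(k,j+1)$ for arbitrary planar webs (which the paper asserts without proof, so you are on equal footing relying on it), is sound.
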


The number at  right-hand side of Chern's bound is   Castelnuovo's bound  $\pi(n,k)$ for the arithmetic genus
of irreducible, non-degenerated curves of degree $k$ on $\P^n$.

\subsection{Algebraic and algebraizable webs}

An important class of examples of webs is the class of {\bf algebraic webs} which are  webs dual to projective curves.
If $C$ is a reduced  degree $k$  projective curve on $\mathbb
P^n$ then for every general hyperplane  $H_0$   a
germ of codimension one $k$-web $\mathcal W_C$ is naturally
defined on $(\check{\mathbb P^n},H_0)$ through projective duality. More precisely
$\mathcal W_C$ is defined by  the germs of submersions  $p_i : (\check{\mathbb
P^n},H_0) \to C$ characterized by
\[
   H \cdot C = p_1(H) + p_2(H) + \cdots + p_k(H)
\]
for every $H$ sufficiently close to $H_0$.

If $\omega_C$ denotes  the sheaf of abelian differentials on $C$, see for instance \cite[Chapter 3, Section 2.2]{invitation}
for the definition, then Abel's addition Theorem says that for every $p_0 \in C$ and every
regular  $1$-form $\omega \in H^0(C, \omega_C)$ the sum
\[
   \int_{p_0}^{p_1(H)}\omega +    \int_{p_0}^{p_2(H)}\omega + \cdots
   +    \int_{p_0}^{p_k(H)}\omega
\]
does not depend on $H$. One can reformulate this statement as
\[
   \sum_{i=1}^k p_i^* \omega = 0 \, .
\]
It follows that $(p_1^*\omega, \ldots, p_k^*\omega)$ can be interpreted as an  abelian
relation of the algebraic web $\mathcal W_C$. Consequently there is an injection of $H^0(C,\omega_C)$ into $\mathcal A(\mathcal W_C)$.
There is a converse to Abel's addition Theorem ( due to Lie, Poincar\'{e}, Darboux, Wirtinger  see \cite[Chapter 4]{invitation} ) which implies that this
injection is indeed an isomorphism.

Since $h^0(C, \omega_C) = (k-1)(k-2)/2$ for any reduced  plane curve of degree $k$, it follows that every algebraic planar web
has maximal rank. The same is no longer true in higher dimensions as the (arithmetic) genus of curves is not determined by theirs degrees.
The curves giving rise to maximal rank webs are exactly those which have maximal genus in a given degree, the so called Castelnuovo curves.

\subsection{Exceptional webs}\label{S:exceptional}

For $k$-webs in strong general position on $(\mathbb C^n,0)$,  $n\ge 3$,  the maximality of rank
implies that the web is {\bf algebraizable} ( biholomorphic to a web obtained from a projective curve through duality as explained above ) when
 $k\le n+1$ or $k\ge 2n$. This was proved by Bol for $n=3$, and for $n>3$   is a recent
result of Tr\'{e}preau, see \cite{Trepreau}.  The planar case  ($n=2$ ) is rather special in what concerns the classification of webs
of maximal rank. For $k \le 4$ it is well-known that planar webs of maximal rank are algebraizable, the proof
for $k=4$ can be traced back to Lie's work on double translation surfaces, and for $k=3$ is due to  Blaschke-Dubourdieu.
In sharp contrast, \cite{mpp} exhibits examples of non-algebraizable planar    $k$-webs of maximal rank
 for every $k\ge 5$. Further infinite families of examples appear in  \cite{cdql}.  Despite   recent
advances, see for instance \cite{Hen,Hen2,mpp,cdql}, the  classification of {\bf exceptional} ( non-algebraizable and of maximal rank ) planar $k$-webs is wide open.
For a short review of these results see \cite{bourbaki}. A more leisurely account can be found in \cite{invitation}.

\medskip

So far the focus was on   germs of  webs, but  we can   consider webs globally defined
on  a complex variety $X$. For our pourposes it will be sufficient to consider completely decomposable
webs, that is webs $\mathcal W$ which can be globally presented as the superposition of $k$ pairwise distinct global
foliations $\mathcal F_1\boxtimes \cdots \boxtimes \mathcal F_k$.
For the general definition of global webs
 see \cite[Chapter 1]{invitation}.

\smallskip

Given a global web $\mathcal W$, there exists a subvariety $\Lambda = \Lambda(\W) \subset X$ such that
for every $x \in X \setminus \Lambda$ the germ of web $\W_x$ obtained by localizing $\W$ at $x$ is a germ of codimension one
web   and the rank of $\mathcal W_x$ is independent of $x \in X \setminus \Lambda$, see \cite[Theorem 1.2.2]{PTese}.  More precisely, over $X \setminus \Lambda$ the space
of abelian relations of $\W$  is  a local system of $k$-uples of germs of  closed $1$-forms. Therefore it still makes sense to talk
about the rank  for global webs.

\medskip

The first  example of  exceptional web    dates back to the 1936  and was found by Bol, see \cite{bol36}. It is the global $5$-web $\mathcal B_5$
 on $\P^2$ formed by the superposition of $4$ pencils of lines with base points in general position and one pencil of
conics through these four base points. We will explain below that this web is naturally associated to an arrangement of lines on $\P^2$.

\section{Resonance webs}\label{S:poly}

Let  $\A = \{ H_1, \ldots, H_r\}$ be an arrangement of $r\ge 1$ hyperplanes in $\P^n$. Recall from the introduction
that $\R^1(\A)$, the first resonance variety of $\A$,
is the union of the maximal isotropic subspaces of $(H^1(M),\wedge)$ of  dimension at least two.

For $i \in \{ 1, \ldots, r\}$, let $h_i\in \C[x_0,\ldots, x_n]$  be a linear polynomial defining $H_i$.
 From now on we will identify $H^\bullet(M)$ with the algebra generated by the logarithmic $1$-forms $(d \log h_i - d\log h_r)$ with  $i \in \{ 1, \ldots, r-1\}$.

 \smallskip

Before defining our main object of study, let us give a brief idea on how one can associate a pencil of hypersurfaces with $d+1$ completely decomposable fibers
to an irreducible component  $\Sigma$  of  $\R^1(\A)$ of dimension $d$. Let   $\omega_1, \ldots, \omega_d$ be  a basis of $\Sigma$. As $\Sigma$ is
isotropic there exists non-constant rational functions $h_{ij} \in \C(x_0, \ldots, x_n)$ ($i \neq j$) such that  $\omega_i = h_{ij} \omega_j$.
Differentiating this last expression one obtains $0 = dh_{ij} \wedge \omega_j$. Thus the level sets of the functions $h_{ij}$ are tangent
to the distribution determined by $\omega_i$ for every $i \in \{ 1, \ldots, d\}$. Stein factorization theorem ensures the existence of a rational
map $f: \mathbb P^n  \dashrightarrow \mathbb P^1$ such that $df \wedge dh_{ij}= df \wedge \omega_i = 0 $ for  every $i,j \in \{ 1, \ldots, d\}$.
Moreover, there exists $d$ linearly independent logarithmic $1$-forms on $\P^1$, say $\eta_1, \ldots, \eta_d$, such that
$f^*(\eta_i)_{\infty} \subset \A$, and  $f^*(\eta_i)  = \omega_i$ for every $i \in \{ 1, \ldots, d\}$.
The maximality of $\Sigma$ implies that the cardinality of $$\PP_{\Sigma} = \bigcup_{i\in \underline d} (\eta_i)_{\infty}$$ is equal to $d+1$.
Thus the rational map $f= f_{\Sigma}$  determines a pencil of hypersurfaces with $d+1$ fibers contained in the support of the arrangement.
Moreover, the restriction of $f$ to $M = \P^n \setminus \A$ is a regular morphism
\[
 f_{|M}  : M \to C_{\Sigma} \, ,
 \]
where  $C_{\Sigma} = \P^1 \setminus \PP_{\Sigma}$.

\bigskip

Let $\mathcal F_{\Sigma}$ be the foliation on $M$ (or on $\P^n$) determined by the level sets of $f_{\Sigma}$.
We define $\W(\A)$,  the   {\bf resonance web} of $\A$  as the global web on $M$ (or on $\P^n$) obtained by the  superposition of the foliations
$\F_{\Sigma}$ with $\Sigma$ ranging over the irreducible components of $\R^1(\A)$.

\begin{example}
Consider the arrangement on $\P^2$ defined by the polynomial  $\{ xyz(x-z)(y-z)(x-y) =0 \}$. The points of its complement  can be interpreted as isomorphism classes of $5$ ordered points on $\P^1$. To wit,
the  point $(x:y:1) \in \P^2$ satisfying $x - y \neq 0, x\neq 0,1,$ and $y\neq 0,1$ naturally correspond to the $5$-tuple $(0,1,\infty,x,y) \in (\P^1)^5$. We will denote this arrangement by $\A_{0,5}$ and its complement in $\P^2$ by $\M_{0,5}$. The resonance variety of $\M_{0,5}$ has five irreducible components and the associated morphisms are the five forgetting maps
$
\M_{0,5} \longrightarrow \M_{0,4} \, ,
$
sending isomorphism classes of five ordered points on $\mathbb P^1$ to isomorphism classes of four ordered points on $\P^1$. It is a simple exercise
to verify that fibers of four of these maps form  pencils of lines with base points in general position, and that the fibers of  one of them is a pencil of conics
through these base points. Thus Bol's exceptional  $5$-web $\mathcal B_5$ is nothing more than $\W(\A_{0,5})$ the resonance web of $\A_{0,5}$.
\end{example}

\begin{figure}[h]
\begin{center}
\includegraphics[width=4.0cm,height=4.0cm]{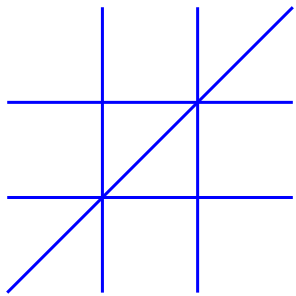}
\caption{Affine trace of the  arrangement $\A_{0,5}$.  }\label{F:bol}
\end{center}
\end{figure}

Due to the important role of Bol's $5$-web in the development of web geometry it is natural to enquire about
the rank of the resonance webs for arbitrary arrangements.
To determine the rank of an  arbitrary web    is a daunting task. The only general method
dates back to Abel ( see \cite{PTese,PirioSel} for a modern account ) and involves lengthy algebraic
manipulations which lead to linear differential equations of high order that have to be solved.
Although implementable, in practice such method is
not computationally efficient and cannot deal with $k$-webs when $k$ is large, say $k> 10$ .

An alternative  approach to compute the rank of certain webs has been devised by Gilles Robert.
Loosely speaking it restricts the search
of abelian relations to a certain class of differential forms  defined from iterated integrals of  logarithmic differentials.
Here  we will explore this approach, adding some topology/combinatorics of arrangements to the picture.

\subsection{Logarithmic abelian relations}\label{S:logar}
Consider the morphism
\begin{eqnarray*}
\Psi_1 : \bigoplus_{I}  H^1(C_{\Sigma}) &\longrightarrow& H^1(M)\\
(\eta_{\Sigma} ) &\mapsto& \sum f_{\Sigma} ^* \eta_\Sigma\, .
\end{eqnarray*}
where $\Sigma$ ranges over all the irreducible components of $\R^1(\A)$. Let
$\Log^1 \W(\A)$ be its kernel, i.e., $\Log^1 \W(\A)= \ker \Psi_1$.

\begin{prop}\label{P:trivial}
The vector space $\Log^1 \W(\A)$ embeds into the space of abelian relations
of $\W(\A)$.
\end{prop}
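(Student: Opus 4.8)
The plan is to exhibit an explicit injective linear map from $\Log^1 \W(\A) = \ker \Psi_1$ into $\mathcal A(\W(\A))$ and to verify, term by term, that its image consists of genuine abelian relations. Recall that a point of $\mathcal A(\W(\A))$ is a tuple of closed $1$-forms $(\eta_\Sigma)_\Sigma$, one for each component $\Sigma$ of $\R^1(\A)$, with $\eta_\Sigma$ tangent to $\F_\Sigma$ and $\sum_\Sigma \eta_\Sigma = 0$ on (a germ of) $M$. So given an element of $\ker\Psi_1$, which is a tuple $(\alpha_\Sigma)_\Sigma$ with $\alpha_\Sigma \in H^1(C_\Sigma)$ and $\sum_\Sigma f_\Sigma^* \alpha_\Sigma = 0$ in $H^1(M)$, I would send it to the tuple of pullbacks $(f_\Sigma^* \omega_{\alpha_\Sigma})_\Sigma$, where $\omega_{\alpha_\Sigma}$ is a closed logarithmic $1$-form on $C_\Sigma = \P^1 \setminus \PP_\Sigma$ representing the class $\alpha_\Sigma$ (for instance a fixed $\C$-linear combination of the $d\log$ of the coordinate differences determined by the points of $\PP_\Sigma$, so the choice is canonical).

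First I would check that each $f_\Sigma^* \omega_{\alpha_\Sigma}$ is a legitimate component: it is closed since $\omega_{\alpha_\Sigma}$ is, and it is tangent to $\F_\Sigma$ because $\F_\Sigma$ is by definition the foliation whose leaves are the level sets of $f_\Sigma$, so any pullback $f_\Sigma^*(\text{form on } C_\Sigma)$ is annihilated by the tangent distribution of $\F_\Sigma$ — concretely $f_\Sigma^* \omega_{\alpha_\Sigma} \wedge \omega_\Sigma = 0$ where $\omega_\Sigma$ defines $\F_\Sigma$, by the discussion preceding the definition of $\W(\A)$. Second, I would check the summation relation. Here is the only point requiring care: the hypothesis $\sum_\Sigma f_\Sigma^* \alpha_\Sigma = 0$ is an equality of de Rham cohomology classes in $H^1(M)$, whereas an abelian relation demands the vanishing of the actual $1$-form $\sum_\Sigma f_\Sigma^* \omega_{\alpha_\Sigma}$. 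The bridge is that on $M$ every global logarithmic $1$-form with poles on $\A$ is closed and that the natural map from such forms to $H^1(M)$ is an \emph{isomorphism} (this is precisely the Brieskorn–Orlik–Solomon description recalled in the introduction: $H^1(M)$ is spanned freely by the classes of the $d\log h_i - d\log h_r$). Since each $f_\Sigma^* \omega_{\alpha_\Sigma}$ is exactly such a logarithmic form (its poles lie on $f_\Sigma^{-1}(\PP_\Sigma) \subset \A$), their sum is a global logarithmic $1$-form on $M$ whose cohomology class is zero; by the injectivity just mentioned the form itself is zero. Hence $(f_\Sigma^* \omega_{\alpha_\Sigma})_\Sigma$ lies in $\mathcal A(\W(\A))$.

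Finally, injectivity of the map $\Log^1 \W(\A) \to \mathcal A(\W(\A))$: if all $f_\Sigma^* \omega_{\alpha_\Sigma}$ vanish as forms then in particular their classes $f_\Sigma^* \alpha_\Sigma$ vanish in $H^1(M)$, and since $f_\Sigma : M \to C_\Sigma$ is a surjective morphism, $f_\Sigma^*$ is injective on $H^1$, forcing every $\alpha_\Sigma = 0$. Thus the tuple was already zero in $\bigoplus_I H^1(C_\Sigma)$. The main obstacle is the second step — making sure one never conflates the cohomological vanishing built into the definition of $\ker\Psi_1$ with the honest vanishing of differential forms required by $\mathcal A(\W(\A))$ — and the resolution is entirely supplied by the freeness of the Orlik–Solomon algebra in degree one, which upgrades "cohomologous to zero" to "equal to zero" for global logarithmic forms on $M$.
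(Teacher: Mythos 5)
Your proof is correct and follows the same route as the paper: pull back logarithmic representatives by the maps $f_\Sigma$, check closedness and tangency to $\F_\Sigma$, and use the Brieskorn isomorphism between global logarithmic $1$-forms and $H^1(M)$ to upgrade cohomological vanishing to actual vanishing of the sum. The paper handles your "only point requiring care" implicitly via its standing identification of $H^\bullet(M)$ with the algebra generated by the forms $d\log h_i - d\log h_r$, and leaves injectivity unstated; your explicit verification of both is a faithful elaboration of the same argument.
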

\begin{proof} Let $(\eta_{\Sigma}) \in \ker \Psi_1$ be a non-zero element.
Each $\eta_{\Sigma}$ corresponds to a logarithmic $1$-form on $\P^1$. The
pull-backs under the corresponding rational map $f_{\Sigma}^* \eta_{\Sigma}$
are  closed logarithmic $1$-forms on $\P^n$, and $f_{\Sigma}^* \eta_{\Sigma}$
defines a distribution tangent to the foliation $\mathcal F_{\Sigma}$. Thus
$(f_{\Sigma}^* \eta_{\Sigma}) \in \A( \W(\A))$ as wanted. \end{proof}

A natural place to look for further abelian relations  is to consider differential forms
with logarithmic coefficients, that is, differential forms like
\[
\log (z) \frac{dz}{z-1}  .
\]
A convenient formalism to deal with such objects is Chen's theory of iterated integrals.

\subsection{Chen's theory of iterated integrals}\label{S:Chen}

In this paragraph $M$ will be an arbitrary connected complex manifold. Given a path $\gamma:[0,1] \to M$  and a collections of $1$-forms $\omega_1, \ldots, \omega_k$ the iterated integral of $\omega_1 \otimes \cdots \otimes \omega_k$
along $\gamma$ is defined as
\[
\int_{\gamma} \omega_1 \otimes \cdots \otimes \omega_k =
\int_{{\Delta_k}} p_1^*\omega_1 \wedge \cdots \wedge p_k^* \omega_k
\]
where $p_i : M^k \to M$ is  the projection on the $i$-th factor and $\Delta_k$ is the image of
standard simplex in $\mathbb R^{k}$ on $M^k$ under the map $\underbrace{\gamma \times \cdots \times \gamma}_{k \, \, \text{times}}$.

\medskip

Even if the $1$-forms $\omega_i$ are closed  this integral {  does} depend on
the path and not just on its homotopy class.
It is a result of Chen \cite[Theorem 4.1.1]{Chen} that the elements of ${\Omega^1(M)}^{\otimes k}$ for which the corresponding iterated integral
does not depend on the representative in a given homotopy class are in the intersection of the kernels of the linear maps, $i \in \{ 1 , \ldots, k-1\}$,
\begin{eqnarray*}
{\Omega^1(M)}^{\otimes k} &\longrightarrow&  \Omega^1(M)^{\otimes i-1} \otimes \Omega^2(M)\otimes \Omega^1(M)^{\otimes (k-i-1)} \\
\omega_1 \otimes \cdots \otimes \omega_k &\mapsto& \omega_1 \otimes \cdots \otimes \omega_{i+1} \wedge \omega_{i+2} \otimes \omega_{i+3} \otimes
\cdots \otimes \omega_k \, ,
\end{eqnarray*}
with the kernels of the  linear maps, $i\in \{1, \ldots, k\}$,
\begin{eqnarray*}
{\Omega^1(M)}^{\otimes k} &\longrightarrow&  \Omega^1(M)^{\otimes i-1} \otimes \Omega^2(M)\otimes \Omega^1(M)^{\otimes (k-i)} \\
\omega_1 \otimes \cdots \otimes \omega_k &\mapsto& \omega_1 \otimes \cdots \otimes d \omega_{i+1}  \otimes \omega_{i+2} \otimes
\cdots \otimes \omega_k \, .
\end{eqnarray*}
If $B^k(M)$ denotes this intersection then every element of $B^k(M)$ gives rise to a
function on the universal covering of $M$ through (iterated) integration. Thus we can interpret
the elements of $B^k(M)$ as closed $1$-forms on the universal covering of $M$ by considering the differential of this function.

\smallskip

Moreover, Chen also proved that  if we consider a vector subspace $V$ of $\Omega^1(M)$  formed by closed $1$-forms with no non-zero exact forms
then the iterated integrals define an injection of  $\oplus_{k \ge 1} V^{\otimes k} \cap B^k(M)$  into the space of holomorphic functions on the
universal covering of $M$. In particular, when $M$ is the complement of a hyperplane arrangement,
this is the case for  $H^1(M)$ seen as a vector subspace of $\Omega^1(M)$. For more about iterated integrals of logarithmic $1$-forms on the complement
of arrangements see  \cite{Kohno}.

\medskip

It is also interesting to observe that $B(M)= \oplus_{k=1}^{\infty} B^k(M)$ admits a natural structure of $\pi_1(M)$-module, with action
defined by analytic continuation of the iterated integrals. Notice that the summands $B^k(M)$  are not $\pi_1(M)$-invariant when $k\ge 2$, but the terms of the filtration $
F^{\bullet} \,  : \, F^k (M) = \bigoplus_{i=1}^k B^i(M)
$
are.

\subsection{Polylogarithmic abelian relations}\label{S:polylogar}
It is natural to extend the construction of Section \ref{S:logar} to
arbitrary iterated integrals of logarithmic $1$-forms. For each $i \ge 1$, consider the morphism
\begin{eqnarray*}
\Psi_i : \bigoplus_{I}  H^1(C_{\Sigma})^{\otimes i} &\longrightarrow& H^1(M)^{\otimes i} \\
(\eta_{\Sigma} ) &\mapsto& \sum f_{\Sigma} ^* \eta_\Sigma\, .
\end{eqnarray*}
where, as before,  $\Sigma$ ranges over all the irreducible components of $\R^1(\A)$. Define
$\Log^i  \W(\A)$ as its kernel, i.e., $\Log^i \W(\A)= \ker \Psi_i$. Define also $\Log^{ \infty}  \W(\A)$ as
the direct sum
\[
\Log^{ \infty}  \W(\A) = \bigoplus_{i=1}^\infty \Log^i \W(\A) \, .
\]

\begin{prop}\label{P:trivial2}
If   $\W$ is  the localization of the  web $\W(\A)$
at a generic point of $M$ then  the vector space $ \Log^{\infty}  \W(\A)$ embeds into the space of abelian relations
of   $\W$. Moreover, the  analytic continuation of this embedding gives rise to a local system of abelian relations
globally defined on $M$.
\end{prop}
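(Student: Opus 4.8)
The plan is to mimic the proof of Proposition \ref{P:trivial}, replacing single logarithmic $1$-forms by their iterated integrals and using Chen's theory to control the dependence on paths. Fix a generic point $p \in M$ and let $\W = \W(\A)_p$ be the localized web. First I would observe that for each irreducible component $\Sigma$ of $\R^1(\A)$, the morphism $f_\Sigma : M \to C_\Sigma$ is a regular morphism to a curve, and an element $\eta \in H^1(C_\Sigma)^{\otimes i}$ automatically lies in $B^i(C_\Sigma)$: indeed any collection of logarithmic $1$-forms on the curve $C_\Sigma$ is closed (so the $d\omega$-conditions are vacuous) and the wedge of any two $1$-forms on a curve vanishes (so the $\omega\wedge\omega'$-conditions are vacuous). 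Hence $\int \eta$ is a well-defined holomorphic function on the universal cover of $C_\Sigma$; pulling back by $f_\Sigma$ (which induces a map of fundamental groups and of universal covers) gives a well-defined holomorphic function $G_\Sigma := (f_\Sigma^* \eta)$ on the universal cover of $M$, locally a holomorphic function near $p$ constant along the leaves of $\F_\Sigma$, hence a first integral of $\F_\Sigma$ near $p$. Its differential $dG_\Sigma$ is a closed $1$-form whose kernel contains the tangent space to $\F_\Sigma$, exactly as required of an abelian relation component.

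Next I would take an element $(\eta_\Sigma) \in \Log^i\W(\A) = \ker\Psi_i$ and show that $(dG_\Sigma)$ is an abelian relation of $\W$. The two conditions $d(dG_\Sigma)=0$ and $dG_\Sigma\wedge\omega_\Sigma=0$ are immediate from the previous paragraph. The remaining condition is $\sum_\Sigma dG_\Sigma = 0$, and this is where I would use that $(\eta_\Sigma)$ is in the kernel of $\Psi_i$: by Chen's theorem (the second part quoted in Section \ref{S:Chen}), the iterated-integral map is \emph{injective} on $\bigoplus_{k\ge 1} V^{\otimes k}\cap B^k(M)$ when $V = H^1(M)$, and more precisely the differential of the iterated integral function attached to $\omega_1\otimes\cdots\otimes\omega_k$ recovers (a linear expression in) the $\omega_j$'s; in particular $\sum_\Sigma G_\Sigma$ corresponds to $\Psi_i\big((\eta_\Sigma)\big) = 0$ in $H^1(M)^{\otimes i}$, which forces $d\big(\sum_\Sigma G_\Sigma\big)=0$ and, since we work near $p$ with functions vanishing at $p$, forces $\sum_\Sigma G_\Sigma \equiv 0$, hence $\sum_\Sigma dG_\Sigma = 0$. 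To upgrade this from a single summand $\Log^i$ to $\Log^\infty = \bigoplus_i \Log^i$, I would note that abelian relations coming from different $i$'s live in the same vector space $\A(\W)$ and simply add; one must check no collapse occurs, i.e.\ that the total map $\Log^\infty\W(\A)\to\A(\W)$ is injective. This follows again from Chen's injectivity statement together with the fact that the filtration $F^\bullet(M)$ separates the $\pi_1(M)$-graded pieces: an element of the kernel would give a nontrivial relation $\sum_\Sigma G_\Sigma = 0$ among pulled-back iterated integrals of different weights, contradicting the injectivity of iterated integration on $\bigoplus_k H^1(M)^{\otimes k}\cap B^k(M)$ after pushing the relation down appropriately.

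Finally, for the global statement I would invoke the discussion preceding Proposition \ref{P:trivial} (and \cite[Theorem 1.2.2]{PTese}): over $M\setminus\Lambda(\W)$ the space of abelian relations of $\W(\A)$ forms a local system, and the functions $G_\Sigma$ are globally defined on the universal cover of $M$ (not merely near $p$), so the embedding $\Log^\infty\W(\A)\hookrightarrow\A(\W_p)$ is compatible with analytic continuation and hence defines a constant sub-local-system of the local system of abelian relations on $M$ (away from $\Lambda$, and extending over the generic part of $M$). I expect the main obstacle to be the bookkeeping in the middle step: carefully extracting from Chen's Theorem 4.1.1 and the injectivity statement the precise claim that ``$\Psi_i(\eta_\Sigma) = 0$ in $H^1(M)^{\otimes i}$ implies $d\sum_\Sigma G_\Sigma = 0$'', i.e.\ identifying the leading term of the differential of an iterated integral of length $i$ with its weight-$i$ symbol in $H^1(M)^{\otimes i}$, and making sure the cross-terms between components $\Sigma\ne\Sigma'$ and between weights are handled by the multiplicativity/shuffle structure of iterated integrals rather than producing spurious obstructions. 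The geometric input (that each $f_\Sigma^*\eta_\Sigma$ is a first integral of $\F_\Sigma$) is routine; the analytic input (well-definedness on universal covers) is exactly Chen's theory; the only delicate point is the linear-algebra translation between ``kernel of $\Psi_i$'' and ``closedness of the summed potential''.
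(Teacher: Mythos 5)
Your proposal is correct and follows essentially the same route as the paper's proof: pull back iterated integrals along $f_\Sigma$ (noting the integrability conditions are vacuous on curves, so each $f_\Sigma^*\eta_\Sigma$ lies in $B^i(M)$), observe the resulting multivalued functions are constant along the leaves of $\F_\Sigma$, deduce the vanishing of the sum from $\Psi_i\big((\eta_\Sigma)\big)=0$ via linearity of iterated integration in the tensor, and obtain injectivity from Chen's injectivity statement applied on the curves $C_\Sigma$. The step you flag as delicate is handled exactly this way in the paper (linearity of the tensor-to-function map, no shuffle bookkeeping needed), so no gap remains.
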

\begin{proof}
Fix $i \in \mathbb N$, and let $(\eta_{\Sigma}) \in \ker \Psi_i$ be a non-zero element.
Each $\eta_{\Sigma}$ corresponds to a iterated integral in $C_{\Sigma}$.  The
pull-backs $f_{\Sigma}^* \eta_{\Sigma}$ under the corresponding rational maps 
are  iterated integrals on $M$, and as such can be interpreted as functions on the universal covering of
$M$. Moreover the (multi-valued) functions on $M$ determined by  $f_{\Sigma}^* \eta_{\Sigma}$
are (locally) constant along the leaves of the foliation $\mathcal F_{\Sigma}$.
As we are considering iterated integrals of logarithmic $1$-forms,
the properties of iterated integrals recalled on Section \ref{S:Chen} imply
the functions on the universal covering of $M$ coming from the components of $(\eta_{\Sigma}) \in \ker \Psi_i$
will sum up to zero. Thus the differentials of these functions will define an abelian relation for $\W(\A)$. To check
injectivity one has just to notice that two different elements in $\oplus_i H^1(C_{\Sigma})^{\otimes i}$
will define different functions on the universal covering of $C_{\Sigma}$ according to Chen's Theory,  see Section \ref{S:Chen}. \end{proof}

Let  $k_d( \A )$  be the number of irreducible components of $\R^1(\A)$ of dimension  $d$ and $ k(\A)$ be the total number of
irreducible components of $\R^1(\A)$. Notice that the resonance web of $\A$ is a $k(\A)$-web.

\begin{cor}\label{C:crude}
The following inequalities hold true:
\begin{align*}
\rank(\W(\A)) & \ge  \dim \Log^{\infty} ( \A) \, , \\
 \dim \Log^{\infty} ( \A) & \le   \frac{(k(\A)-1)(k(\A) -2)}{2}\, ,\\
 \dim \Log^i (\A) & \ge  \sum_{d}  d^i k_d (\A) - \dim B^i(M) \cap H^1(M)^{\otimes i}  \, .
\end{align*}
In particular $\Log^{\infty} ( \A) $ is a  finite dimensional vector space.
\end{cor}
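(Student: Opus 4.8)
The plan is to derive each of the three inequalities from results already established. For the first inequality, I would simply invoke Proposition \ref{P:trivial2}: since $\Log^\infty \W(\A)$ embeds into the space of abelian relations of the localization of $\W(\A)$ at a generic point, and since (by the discussion preceding Corollary \ref{C:crude}) the rank of a global web is the dimension of the space of abelian relations of a generic localization, we get $\rank(\W(\A)) \ge \dim \Log^\infty(\A)$ at once. For the second inequality, I would combine the first inequality with Bol's bound from Corollary \ref{C:bound}(1) — but this only works directly in the case $n=2$, so in general I need a different argument. The point is that $\Log^\infty \W(\A)$ embeds into $\A(\W)$ for $\W$ the localization at a generic point, and then I should restrict $\W$ to a generic germ of a surface through that point; abelian relations restrict to abelian relations of the restricted web, which is a germ of $k(\A)$-web on $(\mathbb C^2,0)$, so Bol's bound $(k(\A)-1)(k(\A)-2)/2$ applies. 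The only thing to check here is that the restriction map on abelian relations is injective on the image of $\Log^\infty$, which holds because the functions on the universal cover determined by distinct elements of $\bigoplus_i H^1(C_\Sigma)^{\otimes i}$ remain distinct after restriction to a generic surface (a generic germ of curve in $\P^n$ through a generic point already sees the full pencils $f_\Sigma$).

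For the third inequality, I would analyze $\Psi_i$ directly. By definition $\Log^i \W(\A) = \ker \Psi_i$, so $\dim \Log^i \W(\A) \ge \dim \bigl(\bigoplus_\Sigma H^1(C_\Sigma)^{\otimes i}\bigr) - \dim(\text{image of }\Psi_i)$. The source has dimension $\sum_\Sigma (\dim H^1(C_\Sigma))^i = \sum_d d^i k_d(\A)$, using that for a component $\Sigma$ of dimension $d$ the curve $C_\Sigma = \P^1 \setminus \PP_\Sigma$ has $|\PP_\Sigma| = d+1$ punctures, hence $\dim H^1(C_\Sigma) = d$. For the image: each $f_\Sigma^*$ carries $H^1(C_\Sigma)^{\otimes i}$ into $H^1(M)^{\otimes i}$, and the image lands inside the subspace of tensors that integrate to well-defined (iterated-integral) functions on the universal cover of $M$ — i.e. inside $B^i(M) \cap H^1(M)^{\otimes i}$. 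Indeed $f_\Sigma^* \eta_\Sigma$ is an iterated integral of logarithmic forms on $M$, and the closedness/integrability conditions cutting out $B^i(M)$ (recalled in Section \ref{S:Chen}) are pulled back from the corresponding conditions on $C_\Sigma$, which are automatic there. Hence $\dim(\operatorname{im}\Psi_i) \le \dim\bigl(B^i(M)\cap H^1(M)^{\otimes i}\bigr)$, and the inequality follows.

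The main obstacle is the bookkeeping in the second inequality: making precise that restricting the global web to a generic germ of surface, and correspondingly restricting its local system of abelian relations, is injective on $\Log^\infty \W(\A)$. The cleanest route is to observe that Chen's injectivity (Section \ref{S:Chen}) is intrinsic to each curve $C_\Sigma$ together with the morphism $f_\Sigma$, and that the composite $(\text{generic curve in }\P^n) \to \P^n \dashrightarrow \P^1$ is still dominant onto each $C_\Sigma$ for a generic choice, so no information is lost upon restriction. Finally, the last sentence — that $\Log^\infty(\A)$ is finite dimensional — is immediate from the second inequality, since $k(\A)$ is finite ($\R^1(\A)$ has finitely many irreducible components), so only finitely many $\Log^i(\A)$ can be nonzero and each is finite dimensional; this also recovers, in a weaker form, the finiteness already implicit in Proposition \ref{P:bound}.
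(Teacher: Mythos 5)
Your argument is correct, and for the first and third inequalities it coincides with the paper's proof: the first is an immediate consequence of Proposition \ref{P:trivial2}, and the third follows by observing that Chen's integrability conditions are vacuous for $1$-forms on a curve, so that $\Psi_i$ factors through $N^i(M)=B^i(M)\cap H^1(M)^{\otimes i}$ and rank--nullity gives the bound. The only divergence is in the second inequality. You correctly note that Corollary \ref{C:bound}(1) as stated applies only to $n=2$ and patch this by restricting the germ of web at a generic point to a generic plane germ; the paper instead relies (implicitly) on the fact that Proposition \ref{P:bound} combined with the universal estimate $\ell^j(\mathcal W)\ge\min(k,j+1)$, valid for arbitrary codimension-one webs in any dimension, already yields $\rank(\mathcal W)\le\sum_{j=1}^{k-2}(k-j-1)=(k-1)(k-2)/2$ with no restriction needed. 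Your route also works, but the injectivity step you worry about is more elementary than your resolution suggests: restriction of abelian relations to a generic plane through the base point is injective on all of $\mathcal A(\mathcal W)$, since each component $\eta_i=g_i(u_i)\,du_i$ restricts to zero only if $g_i\equiv 0$ (the $u_i$ remaining submersions on a generic plane); no appeal to Chen's injectivity theorem is required there. Both approaches deliver the same bound, yours at the cost of a plane-section argument, the paper's at the cost of quoting the dimension-free form of the Bol-type estimate.
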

\begin{proof} The first inequality follows from Proposition \ref{P:trivial2}. The second
follows from the first  combined with Bol's bound ( Corollary \ref{C:bound} ) for the
rank of planar webs. To prove
the third inequality it suffices to notice that Chen's {\it integrability conditions}
are trivially satisfied by collections of $1$-forms on a curve. Thus the morphism $\Psi_k$
factors as in the diagram below
\[
\displaystyle{\xymatrix{
&  N^k (M) \ar[d] \\
{\bigoplus_{\Sigma}}  H^1(C_{\Sigma} )^{\otimes k}  \ar[r]^{\Psi_k} \ar[ur] &H^1(M)^{\otimes k} \ar[d]\\
&H^2 \otimes {H^1}^{ \otimes k-2} \oplus \cdots \oplus {H^1}^{\otimes k-2} \otimes H^2  \\
}}
\]
where $H^i = H^i(M)$ and  $N^k(M)=B^k(M) \cap H^1(M)^{\otimes k}$.
The corollary follows.
\end{proof}

Since $M$ is the complement of a hyperplane arrangement,  $H^{\bullet}(M)$ is generated in
degree one. Consequently
\[
\dim N^2(M) = h^1(M)^2 - h^2(M) \, .
\]

In general we do not know how to control the dimensions of the vector spaces $N^i(M)$ when $i \ge 3$. Nevertheless
for fiber type arrangements there is the following K\"{u}nneth type formula which is a corollary of  \cite[Theorem 3.38]{Brown}:
If $\mathcal A$ is a fiber type  arrangement on $\mathbb P^n$ with exponents $\{e_1, \ldots, e_n \}$ then
\[
\dim N^i(M) = \sum e_1 ^{j_1} \cdots e_n ^{j_n}
\]
where the sum is over all ordered $n$-uples $0\le j_1 \le \cdots \le j_n$ with $ j_1 + \cdots + j_n = i$.

As will be made clear by the examples in Section \ref{S:examples} the bound for the rank given by   Corollary \ref{C:crude}  is rather crude and does
not capture many otherwise easily  predicable  abelian relations. Nevertheless, we will need not more
than these crude bounds to determine the rank of the resonance webs of the braid arrangements.

\section{Resonance webs of the braid arrangements}\label{S:main}
For $n \ge 2$, let  $\A_{0,n+3}$ be the arrangement of hyperplanes
on $\P^n$ defined by the vanishing of the polynomial
\[
\left( \prod_{i=0}^n x_i \right) \left(\prod_{i=1}^n (x_i -x_0 ) \right) \left(\prod_{1 \le i < j \le n} (x_i - x_j) \right) \, .
\]
It is the quotient of the  braid arrangement $B_{n+2}$ on $\mathbb P^{n+2}$
\[
\prod_{ 0 \le i \le  j  \le n+1 } (y_i - y_j)
\]
by its center $\{y_0= y_1 = \ldots = y_{n+1}\}$. The resonance variety of $\A_{0,n+3}$ is isomorphic to
the resonance variety of $B_{n+2}$, and the resonance web of $B_{n+2}$ is a linear pull-back
of the resonance web of $\A_{0,n+3}$. Consequently, both webs have isomorphic space (local system) of
abelian relations.

The complement of $\A_{0,n+3}$ will be denoted by $\M_{0,n+3}$  and can be identified with the moduli
space of $(n+3)$-uples of pairwise distinct ordered points of $\mathbb P^1$. According to \cite{CohenSuciu}, the resonance variety of $\M_{0,n+3}$ has $\binom{n+3}{4}$
irreducible components which are in correspondence with the forgetful maps  $$\M_{0,n+3} \longrightarrow \M_{0,4}.$$

 Thus the
resonance web of $\W(\A_{0,n+3})$ is a $\binom{n+3}{4}$-web on $\M_{0,n+3} \subset \P^n$. The main result of this paper is the determination of the  rank of $\W(\A_{0,n+3})$  given below.

\begin{thm}\label{T:1}
For every $n \ge 2$ the equality
\[
\mathrm{rank}(\mathcal W ( \A_{0,n+3} ) ) = 3\binom{n+3}{4} -\binom{n+2}{3} - \binom{n+1}{2} - n \, .
\]
holds true.
\end{thm}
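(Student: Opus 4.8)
The strategy is to compute upper and lower bounds for $\rank(\W(\A_{0,n+3}))$ and show they coincide. For the lower bound I will exhibit enough abelian relations, and for the upper bound I will use Proposition~\ref{P:bound} together with a careful computation of the numbers $\ell^j(\W(\A_{0,n+3}))$.

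\emph{Upper bound.} By Proposition~\ref{P:bound} we have $\rank(\W) \le \sum_{j\ge 1}\max(0, k - \ell^j(\W))$ with $k=\binom{n+3}{4}$. The key point is that the resonance web of $\A_{0,n+3}$ is not an ordinary web: the submersions $f_\Sigma$ are the $\binom{n+3}{4}$ cross-ratio maps $\M_{0,n+3}\to\M_{0,4}$, and their linear terms $h_\Sigma$ at a generic point satisfy many linear relations. In fact, the cross-ratio of four of the $n+3$ marked points, written in affine coordinates, is a ratio of products of differences $x_i-x_j$; taking logarithmic derivatives, the linear term of $f_\Sigma$ lies in the span of the one-forms $d\log(x_i-x_j)$, which is exactly $H^1(M_{0,n+3})$ of dimension $\binom{n+2}{2}$ — wait, more precisely of dimension equal to the number of hyperplanes minus one after projectivizing. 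So $\ell^1(\W) = \dim(\C h_1 + \cdots + \C h_k)$ is bounded by $h^1(M)$, and one must compute it exactly; similarly $\ell^2(\W)$ is governed by $\dim N^2(M) = h^1(M)^2 - h^2(M)$ via the quadratic Orlik--Solomon relations. The plan is to show $\ell^1(\W)$, $\ell^2(\W)$, $\ell^3(\W)$ stabilize at $k$ for $j\ge 3$ (or determine exactly when $\ell^j < k$), and that their values are $\binom{n+3}{4}-\binom{n+2}{3}$-type expressions. Summing $\sum_j (k-\ell^j(\W))$ should then produce exactly $3\binom{n+3}{4} -\binom{n+2}{3} - \binom{n+1}{2} - n$, matching the claimed formula which has the shape "$3k$ minus lower-order corrections."

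\emph{Lower bound.} Here I will use Corollary~\ref{C:crude}: $\rank(\W(\A)) \ge \dim\Log^\infty(\A)$, and the third inequality of that corollary gives, for each $i$, $\dim\Log^i(\A) \ge \sum_d d^i k_d(\A) - \dim N^i(M)$. For the braid arrangement all $\binom{n+3}{4}$ resonance components have dimension $d=2$ (the forgetful maps land in $\M_{0,4}$, a curve, so $C_\Sigma = \P^1$ minus $3$ points has $h^1 = 2$), hence $\sum_d d^i k_d = 2^i\binom{n+3}{4}$. Using the fiber-type structure of $\A_{0,n+3}$ with exponents $\{1,2,\ldots,n\}$ and the Künneth formula quoted from \cite{Brown} for $\dim N^i(M)$, I would compute $\dim\Log^i(\A) \ge 2^i\binom{n+3}{4} - \dim N^i(M)$ for $i=1,2,3$, check these are positive only for small $i$, and sum to obtain a lower bound. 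The main technical obstacle will be showing that the bound of Corollary~\ref{C:crude} is \emph{sharp} for the braid arrangement — i.e., that $\Psi_i$ really has the expected rank (no unexpected drop) so that $\dim\Log^i$ equals rather than merely bounds below $2^i\binom{n+3}{4}-\dim N^i(M)$, and that $\Log^\infty$ accounts for \emph{all} abelian relations, not just the polylogarithmic ones. This sharpness is exactly the place where the combinatorics of the $\binom{n+3}{4}$ cross-ratio maps and the Orlik--Solomon algebra of the braid arrangement must be used in an essential way; I expect the proof to hinge on identifying $\Log^i$ with a kernel computable from the well-understood cohomology of $\M_{0,n+3}$ and verifying that the composite $\bigoplus_\Sigma H^1(C_\Sigma)^{\otimes i}\to H^1(M)^{\otimes i}\to N^i(M)^{\perp\text{-target}}$ is surjective onto the appropriate space.

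\textbf{Summary of the argument.}
In outline: (1) identify the $\binom{n+3}{4}$ foliations $\F_\Sigma$ with the cross-ratio maps and record that each $C_\Sigma=\M_{0,4}$ so every component of $\R^1$ has dimension $2$; (2) compute $\ell^j(\W)$ for $j=1,2,3,\ldots$ using that linear terms of cross-ratios span $H^1(M)$ and using the Orlik--Solomon / fiber-type description of $H^\bullet(M)$, and feed this into Proposition~\ref{P:bound} to get the upper bound $3\binom{n+3}{4}-\binom{n+2}{3}-\binom{n+1}{2}-n$; (3) use Corollary~\ref{C:crude} with the Künneth formula for $\dim N^i(M)$ to show $\dim\Log^\infty(\A)$ attains the same value, proving the lower bound; (4) conclude equality. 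The crux — and the step most likely to require real work — is step (3): proving the polylogarithmic abelian relations are enough and that $\Psi_i$ has maximal rank, which amounts to a non-degeneracy statement about iterated integrals of the forms $d\log(x_i-x_j)$ on $\M_{0,n+3}$.
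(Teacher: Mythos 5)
Your overall skeleton (upper bound from Proposition \ref{P:bound} via the integers $\ell^j$, lower bound from Corollary \ref{C:crude}) is the right one, but the upper-bound step contains a genuine error that would derail the proof. You assert that $\W(\A_{0,n+3})$ is \emph{not} an ordinary web and propose to compute $\ell^j(\W)$ from cohomological data such as $h^1(M)$ and $N^2(M)=h^1(M)^2-h^2(M)$. This conflates two different objects: $\ell^j(\W)$ is the dimension of the span of the $j$-th powers $h_1^j,\dots,h_k^j$ of the \emph{linear parts at a point} of local first integrals, a subspace of the $\binom{n+j-1}{n-1}$-dimensional space of degree-$j$ homogeneous forms in $n$ variables; it has nothing to do with $H^1(M)$ (in particular $\ell^1\le n$, not $\ell^1\le h^1(M)=\binom{n}{2}+2n$). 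Moreover, the correction terms $n$, $\binom{n+1}{2}$, $\binom{n+2}{3}$ in the target formula are exactly $\binom{n+j-1}{n-1}$ for $j=1,2,3$, i.e.\ the values of $\ell^j$ for an \emph{ordinary} web with $k=\binom{n+3}{4}$ leaves through a point; if $\W$ were not ordinary the bound $\sum_j\max(0,k-\ell^j)$ would only get \emph{larger} and would overshoot the claimed rank. The actual content of the upper bound is precisely the proof that $\W(\A_{0,n+3})$ is ordinary, that is, $\ell^a(\W)=\binom{n+a-1}{n-1}$ for $a=1,2,3,4$ (Proposition \ref{P:boundl}); this is done by a double induction on $n$ and $a$, using a partial derivative $\partial/\partial x_j$ to fit $\mathcal L^a(\W)$ into an exact sequence between $\mathcal L^a(\W^j)$, where $\W^j\simeq\W(\A_{0,n+2})$ is the pull-back under a forgetful map, and $\mathcal L^{a-1}$ of a suitable subweb. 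This inductive step is absent from your plan and is where the real work lies.

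On the lower bound your route is essentially the paper's, but you have misplaced the difficulty. You do not need $\Psi_i$ to have maximal rank, nor do you need $\Log^\infty$ to exhaust $\mathcal A(\W)$: Corollary \ref{C:crude} already gives $\dim\Log^1\ge 2\binom{n+3}{4}-h^1(M)$ and $\dim\Log^2\ge 4\binom{n+3}{4}-h^1(M)^2+h^2(M)$, and a direct computation with Arnold's Poincar\'e polynomial $(1+2t)\cdots(1+(n+1)t)$ of $\M_{0,n+3}$ shows that these two numbers already sum to $3\binom{n+3}{4}-\binom{n+2}{3}-\binom{n+1}{2}-n$. Since this matches the (correctly derived) upper bound, no sharpness statement and no $i\ge 3$ terms are required; the non-degeneracy of iterated integrals enters only through Proposition \ref{P:trivial2}, which injects $\Log^1\oplus\Log^2$ into the space of abelian relations.
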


The remaining of this Section is devoted to the proof of this Theorem. It will be convenient
to work in the affine chart $x_0=1$.


\subsection{Upper bound for the rank}

For each ordered $4$-uple of ordered integers $1 \le \alpha < \beta < \gamma < \delta \le n +3 $  consider the  map
\begin{align*}
\rho_{\alpha \beta \gamma \delta} : \M_{0,n+3} &\longrightarrow \M_{0,4} \\
(x_1, \ldots, x_{n+3} ) &\longmapsto ( x_{\alpha} , x_{\beta} ,  x_{\gamma}, x_{\delta} )
\end{align*}
where the points in the source and the target represent  isomorphism classes.

Since $\M_{0,4} = \mathbb C - \{ 0, 1\}$ each of these $\binom{n+3}{4}$ maps define isotropic subspaces of
$H^1(\M_{0,n+3})$, namely  $\rho_{\alpha \beta \gamma \delta}^* H^1(\M_{0,4}) \subset H^1(\M_{0,n+3})$.
It can be verified   that these isotropic subspaces are maximal, and that there are no other
resonance varieties for $\A_{0,n+3}$, see  \cite{CohenSuciu}.

\smallskip

Consider now $\mathcal W=\mathcal W(\A_{0,n+3})$, the germification of the resonance web of $\A_{0,n+3}$ at a generic point of
$\M_{0,n+3} \subset \mathbb C^n$. It will be useful to consider the following subwebs:
\begin{enumerate}
\item For each $\alpha \in \underline{n+3}$ let $\mathcal W_\alpha$ be the $\binom{n+2}{3}$-subweb defined by the maps
$\rho_I$ where $I$ ranges over all the ordered $4$-uples containing $\alpha$. Similarly for distinct $\alpha,\beta$ and $\alpha,\beta,\gamma$ in the above range let
$\mathcal W_{\alpha,\beta}$ be the $\binom{n+1}{2}$-subweb and $\mathcal W_{\alpha,\beta,\gamma}$  be the $n$-subweb where $I$ ranges over
the ordered $4$-uples that contains $\alpha,\beta$ and
$\alpha,\beta,\gamma$ respectively.
\item For each $\alpha \in \underline{n+3}$ let $\mathcal W^{\alpha}$ be the pull back of $\mathcal W(\A_{0,n+2})$ under the
morphism $$\rho^{\alpha} : \M_{0,n+3} \longrightarrow \M_{0,n+2}$$ that forgets the  $\alpha$-th point.
\end{enumerate}

We combine  the two constructions above and define
\[
\mathcal W ^{\alpha} _{ \beta, \gamma } = \mathcal W^\alpha \cap \mathcal W_{\beta, \gamma}
\]
where the intersection of the webs is the web  formed by the common foliations of both webs.

\begin{prop}\label{P:boundl}
For each $a \in \{ 1, 2, 3, 4 \}$ and every ordered subset $I$ of $\underline{n+3}$ of cardinality $4-a$, $\mathcal L^a(\mathcal W)$ is
isomorphic to $\mathcal L^a(\mathcal W_I)$. Moreover, the dimension of $\mathcal L^a(\mathcal W)$
is given by the formula
\[
\ell ^a ( \mathcal W ) = \ell ^a ( \mathcal W_I ) = \binom{n+ a-1 } { n-1   } \, .
\]
\end{prop}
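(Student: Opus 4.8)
The statement reduces to a purely linear-algebraic computation about the linear forms defining the foliations of the resonance web, by the alternative description \eqref{E:lj}. For each component $\Sigma = \rho_{\alpha\beta\gamma\delta}^*H^1(\M_{0,4})$ the associated foliation $\F_\Sigma$ is induced near a generic point by the submersion $\rho_{\alpha\beta\gamma\delta}$ (composed with a local coordinate on $\M_{0,4}$); its linear term at the base point is a linear functional $h_{\alpha\beta\gamma\delta}$ on the tangent space $T_p\M_{0,n+3}\cong\C^n$. So I need to compute $\dim\bigl(\sum_I \C\,h_I^{\,a}\bigr)$ where $I$ runs over $4$-element subsets of $\underline{n+3}$, and likewise for the restricted families $\mathcal W_I$. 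First I would make the linear forms explicit: working in the chart $x_0=1$ with coordinates $(x_1,\ldots,x_{n+3})$ subject to normalizing three of the points (say $x_1=0$, $x_2=1$, $x_3=\infty$, leaving $x_4,\ldots,x_{n+3}$ as affine coordinates on $\M_{0,n+3}$), the map $\rho_I$ is, up to a Möbius transformation, the cross-ratio of the four marked points indexed by $I$. Differentiating the cross-ratio at a generic point and renaming, one finds $h_I$ is a linear combination of $dx_\alpha,\ldots,dx_\delta$ with nonzero coefficients depending only on the base point; after rescaling coordinates one can arrange $h_I$ to be, up to a scalar, a difference/combination of the standard coordinate differentials. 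The cleanest route is to observe that these $h_I$ span, together, exactly the same span as all pairwise differences $x_i-x_j$ (viewed as linear forms in a suitable coordinate system), i.e. the whole space of linear forms — here one uses that $\A_{0,n+3}$ is the braid arrangement and the resonance components correspond precisely to the rank-two "closed subarrangements" coming from $4$-subsets.

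Second, I would prove the two claims together by a downward/upward induction or, better, directly. For the equality $\ell^a(\mathcal W)=\ell^a(\mathcal W_I)$ with $|I|=4-a$: the key point is that the subweb $\mathcal W_I$ already uses $\rho_{I\cup J}$ for all $(a)$-subsets $J$ disjoint from $I$, and the $a$-th symmetric powers $h_{I\cup J}^{\,a}$ of these already generate the whole of $\mathcal L^a(\mathcal W)$. Intuitively: multiplying out, $h_{I\cup J}^{\,a}$ is a degree-$a$ monomial-like expression in the coordinate differentials indexed by $I\cup J$, and letting $J$ vary over all $a$-subsets disjoint from the fixed $I$ sweeps out enough monomials to recover every $h_K^{\,a}$ for an arbitrary $4$-subset $K$ — one needs $4-|K\cap I|\le a$, which is automatic since $|I|=4-a$ forces $|K\cap I|\ge 4-a-\,?$... so this containment step requires a genuinely combinatorial verification, not just bookkeeping. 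I would isolate this as a lemma: the span of $\{h_{I\cup J}^a : J\subset\underline{n+3}\setminus I,\ |J|=a\}$ equals $\mathcal L^a(\mathcal W)$, proved by expressing each generator in a monomial basis of $\mathrm{Sym}^a$ and exhibiting the needed change of basis.

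Third, the dimension count $\ell^a(\mathcal W)=\binom{n+a-1}{n-1}$. By \eqref{E:trivial} we always have $\ell^a\le\binom{n+a-1}{n-1}$ (the dimension of degree-$a$ forms in $n$ variables), with equality exactly when the $h_I^{\,a}$ span \emph{all} of $\mathrm{Sym}^a(\C^n)^*$. So the content is surjectivity: the $a$-th powers of the finitely many linear forms $h_I$ span the full space of degree-$a$ homogeneous polynomials in $n$ variables. Here I would use that the $h_I$ themselves span the $n$-dimensional space of linear forms (step one) and contain "enough" of them — a standard fact is that if a collection of linear forms on $\C^n$ contains $n$ linearly independent ones and is otherwise in sufficiently general position, their $a$-th powers span $\mathrm{Sym}^a$. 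For the braid arrangement the relevant forms are coordinate differences, and the classical identity expressing monomials of degree $a$ as linear combinations of $a$-th powers of generic linear forms (a Vandermonde-type argument, valid as soon as the number of available forms is at least $\binom{n+a-1}{n-1}$) does the job; one checks $\binom{n+3}{4}\ge\binom{n+a-1}{n-1}$ for $a\le 4$ and $n\ge2$, and that the specific forms $h_I$ are non-degenerate enough by a direct Jacobian/Vandermonde computation.

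\textbf{Main obstacle.} The routine part is the dimension bound from \eqref{E:trivial} and the final Vandermonde surjectivity; the delicate part is the \emph{stability} statement $\ell^a(\mathcal W)=\ell^a(\mathcal W_I)$, i.e. that restricting to $4$-subsets through a fixed $(4-a)$-set $I$ loses nothing in degree $a$. This is where the combinatorics of which monomials appear in $h_{I\cup J}^{\,a}$ must be controlled precisely, and where an inductive structure (relating $\mathcal W(\A_{0,n+3})$ to $\mathcal W(\A_{0,n+2})$ via the forgetful web $\mathcal W^\alpha$ and the subwebs $\mathcal W^\alpha_{\beta,\gamma}$ introduced above) is likely to be the cleanest tool: induct on $n$, using that $\mathcal W_\alpha\cong\mathcal W(\A_{0,n+2})$-type data and that the new foliations added when passing from $n+2$ to $n+3$ points contribute only the "missing" monomials. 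I expect the proof to run: (i) set up coordinates and compute $h_I$ explicitly; (ii) prove the restriction lemma $\mathcal L^a(\mathcal W)\cong\mathcal L^a(\mathcal W_I)$ by the monomial-basis argument, possibly with induction on $n$; (iii) conclude $\ell^a=\binom{n+a-1}{n-1}$ from the Vandermonde surjectivity plus the trivial upper bound.
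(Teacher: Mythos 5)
Your overall framing is right --- the statement reduces via (\ref{E:lj}) to showing that the $a$-th powers of the linear parts $h_I$ of the cross-ratio maps span all of $\mathrm{Sym}^a$, the upper bound $\binom{n+a-1}{n-1}$ is free from (\ref{E:trivial}), and an induction on $n$ through the forgetful maps is the natural mechanism --- but both of the steps that carry the actual content are left unproved, and the fallback you offer for the harder one would not work as stated. Your step (iii) invokes the fact that $a$-th powers of \emph{generic} linear forms span $\mathrm{Sym}^a$ once there are at least $\binom{n+a-1}{n-1}$ of them; but the forms $h_I$ are anything but generic (they are differentials of cross-ratios at a fixed base point, highly constrained linear combinations of the $dx_i$), and your promise of "a direct Jacobian/Vandermonde computation" to certify non-degeneracy is precisely the difficulty being deferred, not resolved. (Tellingly, the paper's acknowledgements record that even the case $n=3$ was first settled by a computer-assisted computation.) Likewise your step (ii), the stability $\ell^a(\mathcal W)=\ell^a(\mathcal W_I)$, is restated rather than proved: your own text trails off mid-inequality when trying to check the combinatorial condition on $|K\cap I|$, and "expressing each generator in a monomial basis and exhibiting the needed change of basis" is a plan, not an argument.

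The paper avoids both computations by a double induction on $(n,a)$ whose engine is a short exact sequence you do not set up. Applying the derivation $\partial/\partial x_j$ to $\mathrm{Sym}^a\Omega^1$ yields, for each $j$, a commutative diagram with exact rows
\[
0 \to \mathcal L^a(\mathcal W^j_I) \to \mathcal L^a(\mathcal W_I) \xrightarrow{\ \partial/\partial x_j\ } \mathcal L^{a-1}(\mathcal W_{I\cup\{j\}}) \to 0,
\qquad
0 \to \mathcal L^a(\mathcal W^j) \to \mathcal L^a(\mathcal W) \xrightarrow{\ \partial/\partial x_j\ } \mathcal L^{a-1}(\mathcal W_j) \to 0,
\]
where $\mathcal W^j\cong\mathcal W(\A_{0,n+2})$ handles the left term by induction on $n$ and the right term is handled by induction on $a$; the five lemma then gives the middle isomorphism, and the dimension formula follows from Pascal's rule $\binom{(n-1)+a-1}{n-2}+\binom{n+a-2}{n-1}=\binom{n+a-1}{n-1}$. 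The base cases are $a=1$ (where $\mathcal L^1(\mathcal W_I)=\bigoplus_i\C\,dx_i$ after normalizing three points to $0,1,\infty$) and $n=2$ (where $\mathcal W_I$ is an $(a+1)$-web in two variables and powers of $a+1$ pairwise non-proportional binary linear forms are independent --- the only place a Vandermonde argument genuinely enters). You gestured at the forgetful-map induction, so the skeleton is recoverable, but without the exact sequence and the two base cases the proof is missing its load-bearing parts, and the generic-forms spanning claim cannot substitute for them.
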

\begin{proof}
The proof will be by a double induction on $n$ and $a$.

When $n=2$, $\mathcal W_I= \mathcal W(\A_{0,5})_I$ is an $(a+1)$-web on $\mathbb C^2$. Therefore
$\mathcal L ^a(\mathcal W)$ contains $\mathcal L^a(\mathcal W_I)$, and equation (\ref{E:trivial}) implies
\[
 a+1 \ge \mathcal \ell^a(\mathcal W) \ge \mathcal \ell^a(\mathcal W_I)= a+1.
\]
The result follows in this case.

When $a=1$, we can assume that $I = ( n+ 1, n+ 2, n+ 3)$  and therefore by normalizing the points of $\M_{0,n+3}$ in such way that the last three
are $0,1,\infty$ we have that the $n$ foliations defining $\mathcal W_I$ are defined by the morphisms
$(x_1, \ldots, x_n, 0, 1, \infty) \mapsto (x_i, 0,1,\infty)$. Clearly $\mathcal L^1 ( \mathcal W) = \mathcal L^1 ( \mathcal W_I ) = \oplus_{i=1}^n \mathbb C x_i$.

\smallskip

Suppose now that $a \ge 2$ and $n\ge 3$. Assume  $I \subset \{ n+1, n+2, n+3 \}$, $ j \in \{ 1, \ldots, n \}$, and $(x_{n+1}, x_{n+2}, x_{n+3})= (0,1,\infty)$.
Consider the linear map defined by the derivation  $\frac{\partial }{\partial x_j}$ from $\mathbb C_a[x_1,\ldots,x_n] \simeq Sym^a \Omega^1(\mathbb C^n,0)$
to  $\mathbb C_{a-1}[x_1,\ldots,x_n] \simeq Sym^{a-1} \Omega^1(\mathbb C^n,0)$.
It  induces
the following diagram with exact rows where the unlabeled  arrows are the natural inclusions.
\[
\xymatrix{
0 \ar[r]& \mathcal L^a(\mathcal W_I^j)  \ar[d]\ar[r]  & \mathcal L^a(\mathcal W_I) \ar[d]\ar^{\frac{\partial }{\partial x_j}}[r]  & \mathcal  L^{a-1}(\mathcal W_{I \cup \{ j \} }) \ar[d]\ar[r] & 0 \\
0 \ar[r]& \mathcal L^a(\mathcal W^j)  \ar[r]  &\mathcal L^a(\mathcal W) \ar^{\frac{\partial }{\partial x_j}}[r]  & \mathcal  L^{a-1}(\mathcal W_j) \ar[r] & 0 \\
}
\]
Notice that  $\mathcal W^j$ is isomorphic to $\mathcal W( \A_{0,n+2})$ and therefore, by induction hypothesis,
 the leftmost  vertical arrow is an isomorphism. The induction hypothesis also implies that the vector spaces $\mathcal L^{a-1}(\mathcal W_j)$ and
$\mathcal  L^{a-1}(\mathcal W_{I \cup \{ j \} })$ are both isomorphic to $\mathcal L^{a-1}(\mathcal W)$. Thus the rightmost arrow
is also an isomorphism.
It follows that the middle vertical arrow is also an isomorphism and has dimension given by
\[
\ell ^a ( \mathcal W ) =  \binom{(n-1) + a -1}{ (n-1) -1} + \binom{ n + (a-1) -1}{n-1} = \binom{n+ a-1 } { n-1   } \, .
\]
The proposition follows.
\end{proof}

As a consequence we obtain that for every $n \ge 2$ the webs $\mathcal W(\A_{0,n+3})$ are ordinary webs.

\begin{cor}\label{C:ub}
 For every $n \ge 2$ the inequality
\[
\mathrm{rank}(\mathcal W ( \A_{0,n+3} ) ) \le 3\binom{n+3}{4} -\binom{n+2}{3} - \binom{n+1}{2} - n \, .
\]
holds true.
\end{cor}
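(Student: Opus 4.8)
The strategy is to combine the rank estimate of Proposition \ref{P:bound} with the exact values of $\ell^j$ furnished by Proposition \ref{P:boundl}. Write $\mathcal W = \mathcal W(\A_{0,n+3})$ and $k = k(\A_{0,n+3}) = \binom{n+3}{4}$ for the number of its foliations. Proposition \ref{P:bound} then reads
\[
\mathrm{rank}(\mathcal W) \le \sum_{j=1}^{\infty} \max\bigl(0,\; k - \ell^j(\mathcal W)\bigr),
\]
so the whole problem reduces to controlling the integers $\ell^j(\mathcal W)$. Proposition \ref{P:boundl} hands these over for $j \le 4$: one gets $\ell^1(\mathcal W) = n$, $\ell^2(\mathcal W) = \binom{n+1}{2}$, $\ell^3(\mathcal W) = \binom{n+2}{3}$, and $\ell^4(\mathcal W) = \binom{n+3}{n-1} = \binom{n+3}{4} = k$.

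Since $j \mapsto \binom{n+j-1}{n-1}$ is nondecreasing in $j$ and equals $k$ at $j = 4$, we have $\ell^j(\mathcal W) \le k$ for $j \in \{1,2,3\}$; hence the three corresponding summands are the nonnegative numbers $k - \ell^j(\mathcal W)$, and they add up to exactly $3k - \binom{n+2}{3} - \binom{n+1}{2} - n$. So the asserted inequality follows the moment one checks that every summand with $j \ge 4$ vanishes, i.e. that $\ell^j(\mathcal W) = k$ for all $j \ge 4$.

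This stabilization is the only step that needs an argument beyond bookkeeping, and it is where I expect the (mild) difficulty to lie. For $j = 4$ it is precisely the equality $\ell^4(\mathcal W) = k$ just obtained. For $j \ge 5$ I would argue by a short induction: let $h_1,\dots,h_k$ be the pairwise non-proportional linear forms attached to $\mathcal W$ as in (\ref{E:lj}); if $h_1^m,\dots,h_k^m$ are linearly independent and $\sum_i c_i h_i^{m+1} = 0$, then applying a constant-coefficient derivation $D$ chosen generically so that $D h_i \neq 0$ for every $i$ yields $\sum_i c_i\,(D h_i)\,h_i^{m} = 0$, whence $c_i = 0$ for all $i$; thus $\ell^{m+1}(\mathcal W) = k$ whenever $\ell^m(\mathcal W) = k$, and induction from $m = 4$ completes the claim. (Alternatively, one may simply invoke the fact recorded just before the statement that $\mathcal W(\A_{0,n+3})$ is an ordinary web.) Either way, $\mathrm{rank}(\mathcal W) \le 3\binom{n+3}{4} - \binom{n+2}{3} - \binom{n+1}{2} - n$, which is the desired bound.
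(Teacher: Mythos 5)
Your proposal is correct and follows essentially the same route as the paper, whose entire proof is the one line ``combine Propositions \ref{P:bound} and \ref{P:boundl}''. The only addition is your explicit verification that $\ell^j(\mathcal W)=k$ for $j\ge 4$ (via the derivation induction, or equivalently the ordinariness of $\mathcal W(\A_{0,n+3})$ noted right after Proposition \ref{P:boundl}), a step the paper leaves implicit.
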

\begin{proof}
It suffices to combine Propositions \ref{P:bound} and  \ref{P:boundl}.
\end{proof}

\subsection{Lower bound for the rank}

As there are exactly $\binom{n}{2}+ 2n +1 $ hyperplanes in the (projective)
 arrangement $\A_{0,n+3}$, $h^1(\M_{0,n+3})= \binom{n}{2}+ 2n$. Notice also that  $h^1(\M_{0,4}) =2$.
Since $k(\A_{0,n+3}) = k_2(\A_{0,n+3})$, Corollary \ref{C:crude} implies
\begin{align}\label{E:K1}
\dim\Log^1 \W(\A_{0,n+3}) & \ge 2 \binom{n+3}{4} - h^1(\M_{0,n+3})
\\ &= 2 \binom{n+3}{4} - \binom{n}{2} + 2n.  \nonumber
\end{align}

Similarly,
\[
\dim \Log^2\W(\mathcal A_{0,n+3}) \ge \binom{n+3}{4} \cdot h^1(\M_{0,4})^2  - \dim N^2(\M_{0,n+3}) \, .
\]
It is a result of Arnold \cite{Arnold} that  the Poincar\'{e} polynomial
of $\M_{0,n+3}$ is $P(t) = (1 +2t) (1+3t) \cdots (1 +(n+1)t)$. Therefore the dimension of $N^2(\M_{0,n+3})$ is equal to
\[
h^1(\M_{0,n+3})^2 - h^2(\M_{0,n+3}) = \left(\binom{n}{2} + 2n\right) ^2 - P''(0)/2 \, .
\]

Consequently
\begin{equation}\label{E:K2}
\dim AR_{log}^2(\mathcal A_{0,n+3}) \ge 4 \binom{n+3}{4} -h^1(M_{0,n+3})^2 +  h^2(M_{0,n+3}) \, .
\end{equation}

\subsection{ Proof of Theorem \ref{T:1}}

It is not hard to prove by induction that summing  the right-hand side of the inequalities (\ref{E:K1}) and (\ref{E:K2}) one obtains
$3\binom{n+3}{4} -\binom{n+2}{3} - \binom{n+1}{2} - n$. Therefore, Proposition \ref{P:trivial2} implies that
\begin{eqnarray*}
\mathrm{rank} ( \mathcal W(\mathcal A_{0,n+3} ) )&\ge&  \dim AR_{log}^1(\mathcal A_{0,n+3}) + \dim AR_{log}^2(\mathcal A_{0,n+3}) \\
 &\ge& 3\binom{n+3}{4} -\binom{n+2}{3} - \binom{n+1}{2} - n \, .
\end{eqnarray*}
Combining this lower  bound with the upper bound given in Corollary \ref{C:ub} concludes the proof of  Theorem \ref{T:1}. \qed

\medskip

As the webs $\W (\A_{0,n+3})$ attains Cavalier-Lehmann's bound, it is natural to ask if they are algebraizable or, more generally, linearizable.
As the leaves of  algebraic webs are contained hypersurfaces, every algebraizable web is linearizable but the converse is not always true. In
\cite{PLinear}  it is proved that the webs $\W (\A_{0,n+3})$ are not  linearizable. We refer to this work and references therein for
more about the  linearization of webs.

It is  interesting to compare our Theorem \ref{T:1} with Damiano's determination of the rank of
the (dimension one) web given by the $(n+3)$  maps  \cite{Damiano}
\[
 \M_{0,n+3} \longrightarrow \M_{0,n+2} \, .
\]
These webs turn out to attain the corresponding bound for the rank of one-dimensional webs and are also non-linearizable.

\section{Examples}\label{S:examples}

This section is devoted to the study of
some exceptional planar webs -- first found by Pirio and Robert \cite{PirioSel, Robert}  -- which are resonance webs of suitable line arrangements
in $\P^2$. We use them to  recognize other sources of
abelian relations besides iterated integrals with logarithmic forms with poles on the arrangement.

\subsection{More polylogarithmic abelian relations}

We start with a simple example.
Consider $\A_0$ as the  arrangement of $9$ lines on $\P^2$ with affine trace presented in Figure \ref{F:bad}.
If we suppose that the triple point is at the origin of $\C^2$ then pencil of lines through it corresponds to
an irreducible component of the resonance variety of dimension two. There are other two triple points at the line at infinity and
they also correspond to irreducible components of the  resonance variety of dimension two. There are no other irreducible components
of $\R^1(\A_0)$. The resonance web $\W(\A_0)$ is the  $3$-web determined by the superposition of the foliations given by the level sets
of the functions $x,y,x/y$. It clearly has rank one as
\[
d\log x - d \log y - d\log \frac{x}{y} = 0 \, ,
\]
but $\dim \Log^{\infty} \W(\A_0) = 0$ as one can promptly verify.

\begin{figure}[h]
\begin{center}
\includegraphics[width=4.0cm,height=4.0cm]{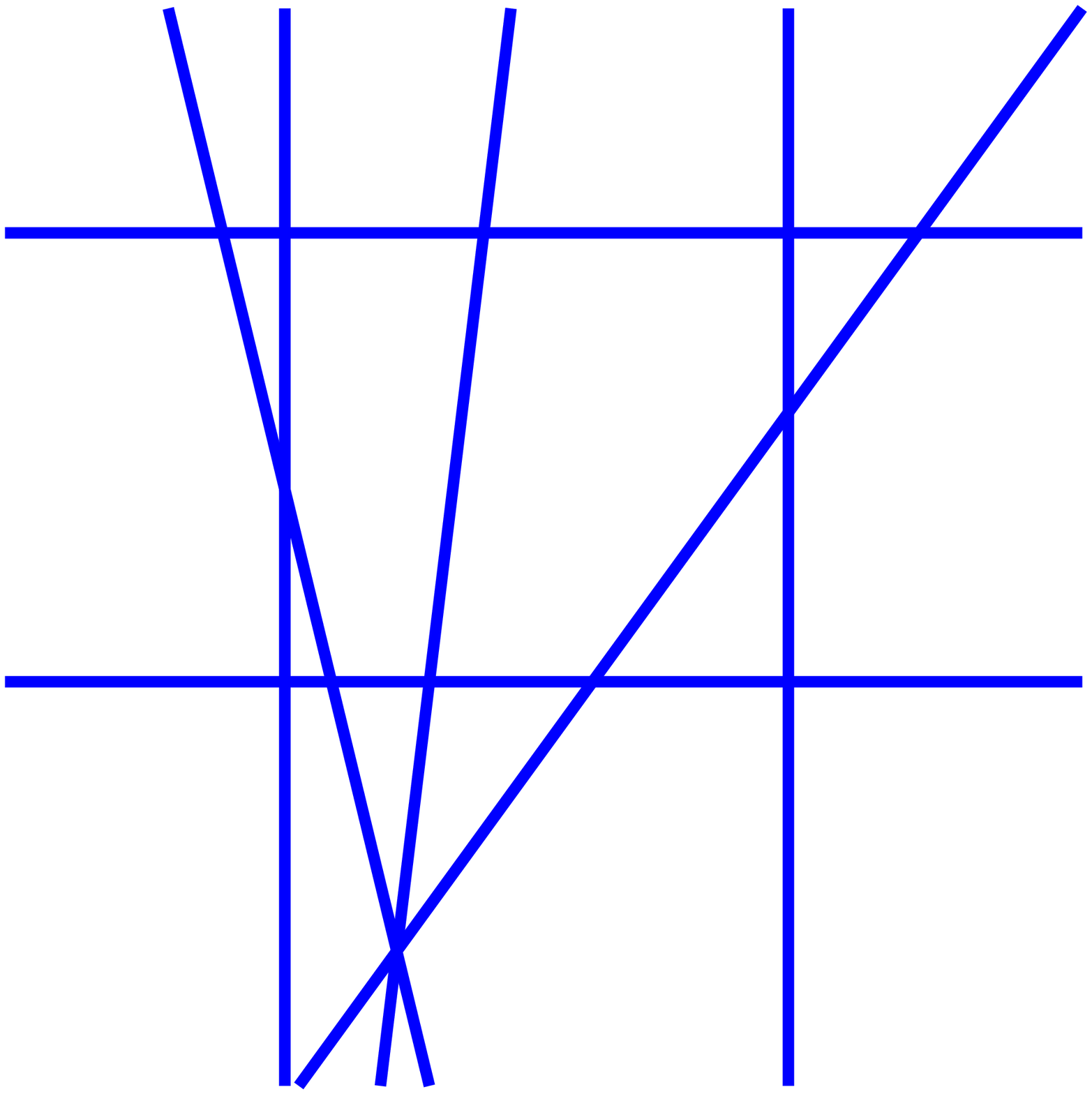}
\caption{ $1= \rank(\W(\A)) > \dim \Log^{\infty} \W(\A) = 0$.    }\label{F:bad}
\end{center}
\end{figure}

\medskip

Given an arrangement of hyperplanes $\A$ on $\P^n$ we define the {\bf resonance closure} of $\A$, denoted by $\overline \A$,
as the arrangement of hypersurfaces on $\P^n$ characterized by the following property: $H \in \overline \A$ if and only
$H \in \A$, or there exists two distinct irreducible components $\Sigma_1, \Sigma_2$ of $\R^1(\A)$ such that
$\dim f_{\Sigma_1}( H) = \dim f_{\Sigma_2}(H) = 0$. In other words either $H$ belongs to the original arrangement $\A$ or
it is invariant by two distinct foliations of the web $\W(\A)$.  The complement of $\overline \A$ will be denoted by $\underline M$.

\begin{example} The resonance closure of the example $\A_0$ is obtained by adding the lines $\{ x=0\}$ and $\{ y =0 \}$. A more interesting
(family of) examples is obtained by considering the (family of) arrangement(s) $\K_5$ determine by the $10$
lines joining $5$ points in $\P^2$  in general position. It can be verified that the resonance variety of $\K_5$ has $10$ irreducible components:
$5$ of dimension $3$ corresponding to pencils the lines through the points, and $5$ of dimension $2$ corresponding to
the pencils of conics through $4$ of the $5$ points.  If $C$ denotes the conic through the $5$ points then
$ \overline{\K_5} = \K_5 \cup \{ C \}$.
\begin{figure}[h]
\begin{minipage}{4.5cm}
\begin{center}
\includegraphics[width=4.0cm,height=4.0cm]{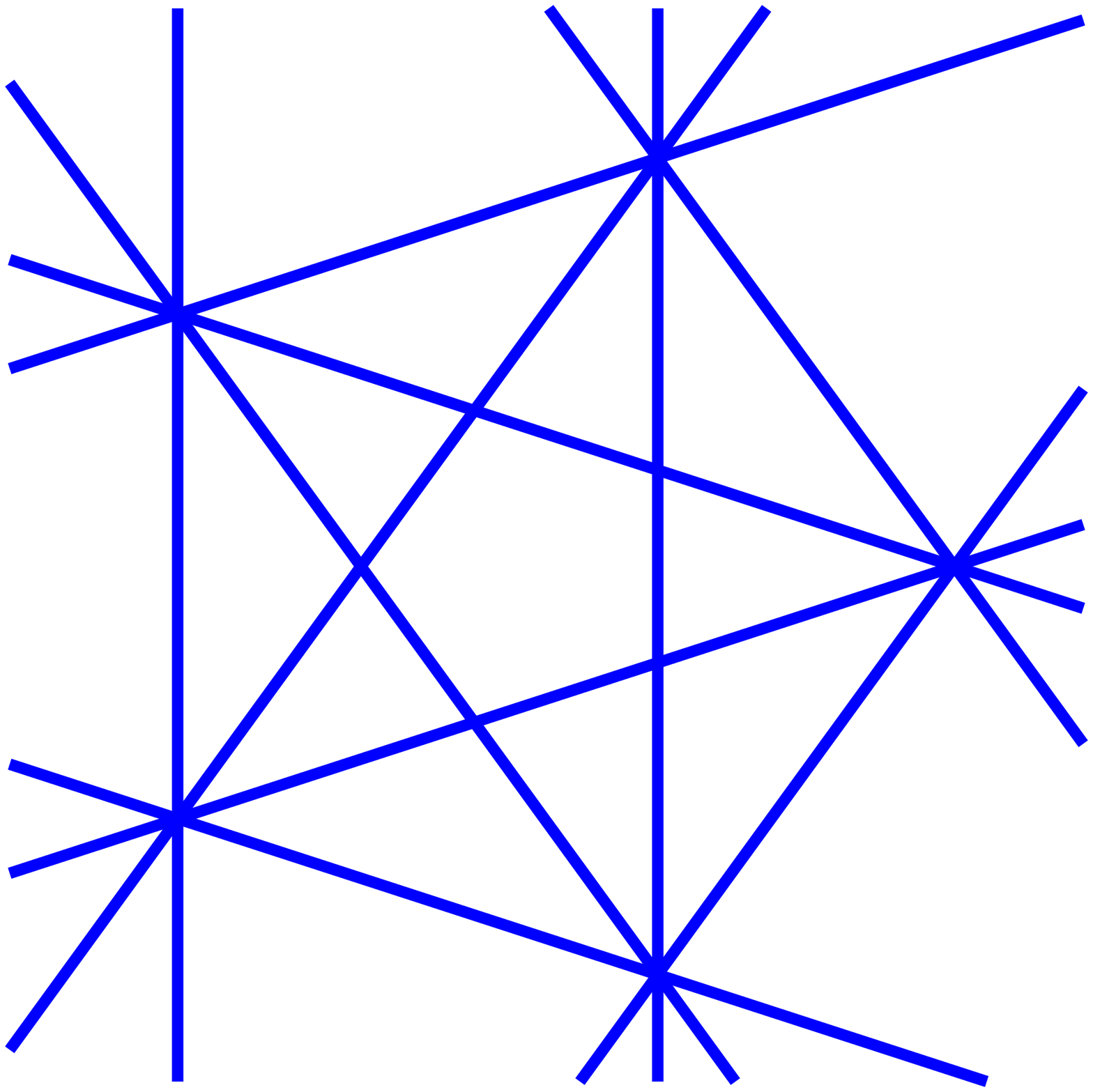}
\end{center}
\end{minipage}
\hfill
\begin{minipage}{4.5cm}
\begin{center}
\includegraphics[width=4.0cm,height=4.0cm]{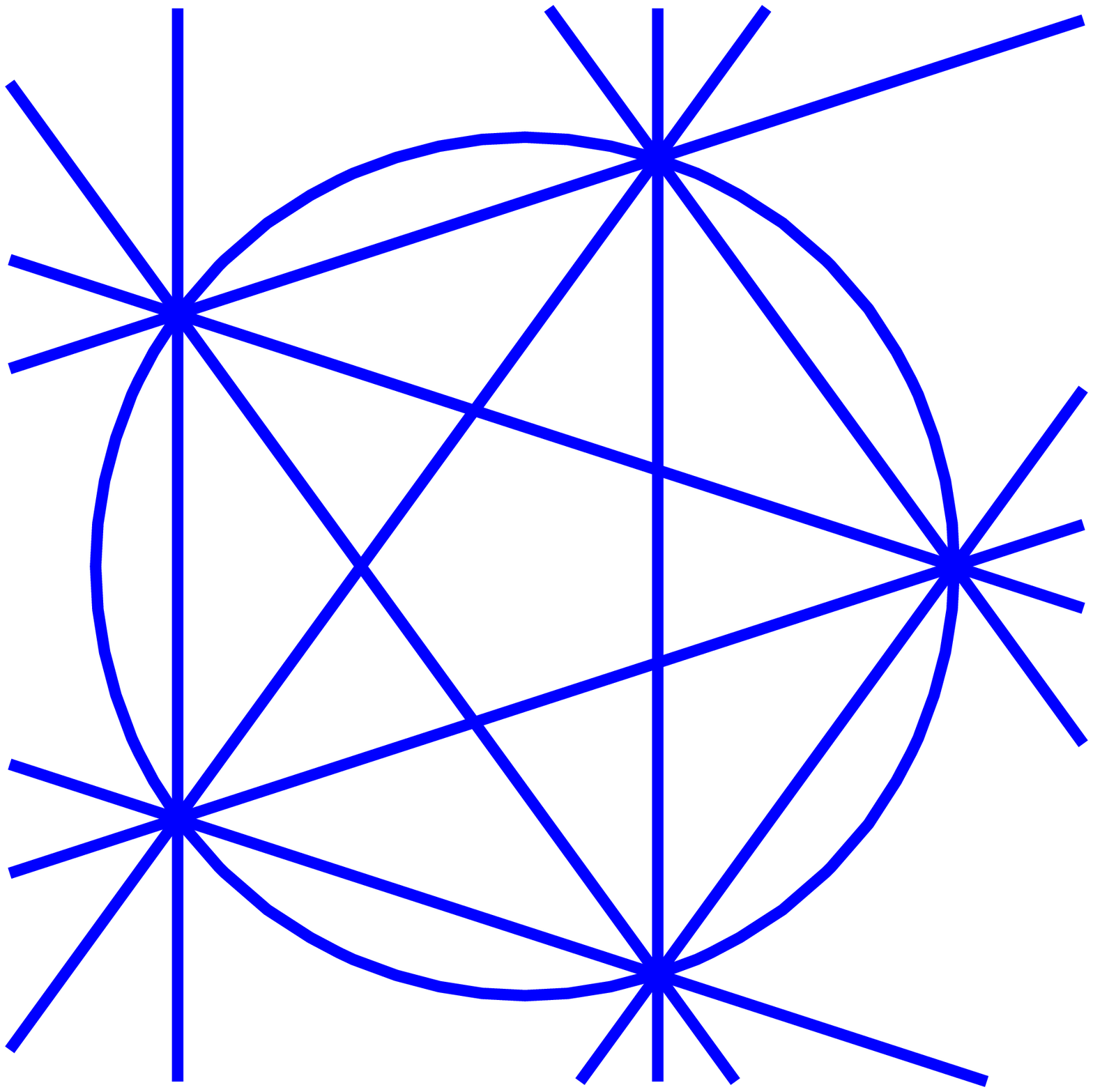}
\end{center}
\end{minipage}
{\caption{On the left  $\K_5$, on the right $\overline{\K_5}$. The line at infinity is not included in the arrangements above.}\label{F:K5}}
\end{figure}
\end{example}

The relevance of the definition of the closure of an arrangement is put in evidence by the
following  specialization of    \cite[Theorem 1.2.2]{PTese}.

\begin{prop}
If $\mathcal W(\A)$ is the resonance web of an arrangement $\A$ then
every germ of abelian relation of $\mathcal W(\A)$ extends to the universal
covering of $\underline M$.
\end{prop}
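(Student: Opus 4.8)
The plan is to deduce the statement as a direct specialization of the general extension result \cite[Theorem 1.2.2]{PTese}, after verifying that the subvariety $\Lambda$ of bad points for the global web $\mathcal W(\A)$ is contained in $\overline\A$, so that $\underline M = \P^n \setminus \overline\A$ is swept out by leaves along which abelian relations propagate. First I would recall what \cite[Theorem 1.2.2]{PTese} gives: for a global web $\mathcal W$ on a manifold $X$ there is a proper analytic subset $\Lambda(\mathcal W) \subset X$ such that the sheaf of abelian relations is a local system on $X \setminus \Lambda$, and moreover germs of abelian relations extend by analytic continuation along any path in $X \setminus \Lambda$ — equivalently, to the universal covering of $X \setminus \Lambda$ (this is the precise form of the cited theorem that the proposition is specializing). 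So the entire content to check is the inclusion $\Lambda(\mathcal W(\A)) \subseteq \overline\A \setminus \A$, i.e. that every point of $\underline M = M \setminus (\overline\A \setminus \A)$ — that is, every point of $\P^n$ outside $\overline\A$ — is a point where $\mathcal W(\A)$ localizes to a genuine germ of codimension one $k(\A)$-web with the expected rank.

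Next I would unwind what it means for a point $x \in M$ to lie in $\Lambda(\mathcal W(\A))$. Each foliation $\F_\Sigma$ is defined by the rational map $f_\Sigma : \P^n \dashrightarrow \P^1$ which, by the discussion preceding the definition of $\mathcal W(\A)$, restricts to a \emph{regular morphism} $f_{\Sigma|M} : M \to C_\Sigma$; hence $\F_\Sigma$ is a genuine smooth foliation at every point of $M$, with no indeterminacy or singular points inside $M$. Therefore the only way $x \in M$ can be a bad point of the web is that two of the foliations $\F_{\Sigma_1}, \F_{\Sigma_2}$ fail to be in general position at $x$, i.e. their tangent hyperplanes at $x$ coincide. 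I would then argue that the locus where $T_x\F_{\Sigma_1} = T_x\F_{\Sigma_2}$, being the common tangency locus of two distinct foliations both regular on $M$, is (the trace on $M$ of) a hypersurface $H$ of $\P^n$, and that $H$ is invariant by both $\F_{\Sigma_1}$ and $\F_{\Sigma_2}$ — equivalently $\dim f_{\Sigma_1}(H) = \dim f_{\Sigma_2}(H) = 0$, since the tangency locus of two foliations by level sets of maps to $\P^1$ is precisely a union of common fibers. By the very definition of the resonance closure, any such $H$ belongs to $\overline\A$. Consequently $\Lambda(\mathcal W(\A)) \cap M \subseteq \overline\A \setminus \A$, which together with $\Lambda \cap (\P^n \setminus M) \subseteq \A \subseteq \overline\A$ gives $\Lambda(\mathcal W(\A)) \subseteq \overline\A$, hence $\P^n \setminus \overline\A \subseteq \P^n \setminus \Lambda(\mathcal W(\A)) = \underline M$. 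Applying \cite[Theorem 1.2.2]{PTese} on $X = \P^n$ (or on $M$) with this $\Lambda$ then yields that every germ of abelian relation of $\mathcal W(\A)$ extends to the universal covering of $\underline M$.

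The step I expect to be the main obstacle is the \textbf{characterization of the tangency locus} of two distinct foliations $\F_{\Sigma_1}, \F_{\Sigma_2}$ on $M$: one must check that it is a genuine hypersurface (not all of $M$, which is where distinctness of $\Sigma_1,\Sigma_2$ enters, and not something lower-dimensional that could be ignored) and that each of its irreducible components is invariant by \emph{both} foliations, so as to match the definition of $\overline\A$ exactly. For the first point, if the tangency locus were all of $M$ then $\F_{\Sigma_1} = \F_{\Sigma_2}$ as foliations, forcing $f_{\Sigma_1}$ and $f_{\Sigma_2}$ to have the same fibers and hence (after composing with an automorphism of $\P^1$) the associated isotropic subspaces $\Sigma_1,\Sigma_2$ of $H^1(M)$ to coincide, contradicting that they are distinct irreducible components of $\R^1(\A)$. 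For the invariance, one writes $\F_{\Sigma_i}$ as $\ker \omega_i$ with $\omega_i$ a logarithmic closed $1$-form pulled back from $\P^1$, observes that on the tangency locus $\omega_1 \wedge \omega_2 = 0$, and uses closedness ($d\omega_i = 0$) to see that the zero divisor of $\omega_1 \wedge \omega_2$ is invariant by both distributions — a standard fact in foliation theory. Modulo this (essentially local-analytic) verification, the proposition is immediate from the cited theorem, and I would present it in roughly this order: (i) recall the statement of \cite[Theorem 1.2.2]{PTese}; (ii) observe $\F_\Sigma$ is regular on $M$; (iii) identify $\Lambda(\mathcal W(\A)) \cap M$ with the union of pairwise tangency loci; (iv) show each such locus is a hypersurface lying in $\overline\A$; (v) conclude.
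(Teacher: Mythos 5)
Your reduction to \cite[Theorem 1.2.2]{PTese} is the right framework, and the paper itself offers nothing more than that citation; but the step you yourself flag as the main obstacle --- that every codimension-one component of the tangency locus of two distinct foliations $\F_{\Sigma_1},\F_{\Sigma_2}$ is invariant by both, hence lies in $\overline{\A}$ --- is false, and with it the inclusion $\Lambda(\W(\A))\subseteq\overline{\A}$ on which your whole argument rests. The ``standard fact'' you invoke does not hold for closed $1$-forms: for $\omega_1=dy$ and $\omega_2=d(y-x^2)$ one has $\omega_1\wedge\omega_2=2x\,dy\wedge dx$, and the tangency line $\{x=0\}$ is invariant by neither foliation. (What is true is the weaker statement that a component of the tangency locus invariant by \emph{one} of the foliations is automatically invariant by the other.) More to the point, the failure occurs for the resonance webs themselves. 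Take $\A=\K_5$, the pencil of lines through $p_1$ and the pencil of conics through $p_2,\dots,p_5$: with $p_2,\dots,p_5=(\pm1,\pm1)$ and $p_1=(a,b)$ the tangency locus is the cubic
\[
(y^2-by)(x^2-1)-(x^2-ax)(y^2-1)=0,
\]
which passes through all five points but is irreducible for generic $(a,b)$ (e.g.\ $(a,b)=(2,0)$ gives $x^2-2x+(2x-1)y^2=0$). An irreducible cubic is invariant by neither pencil and is certainly not contained in $\overline{\K_5}=\K_5\cup\{C\}$. So $\underline M$ genuinely meets the discriminant of the web, and the universal covering of $\underline M$ is strictly ``larger'' than that of $M\setminus\Lambda$; the proposition is claiming extension \emph{across} these non-invariant tangency components, which your argument cannot deliver.

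Concretely, the missing idea is that one must separate the locus where the web degenerates from the locus where abelian relations actually develop singularities or monodromy. Each component $\eta_\Sigma$ of a germ of abelian relation is closed and tangent to $\F_\Sigma$, hence of the form $f_\Sigma^*\theta_\Sigma$ for a germ of $1$-form $\theta_\Sigma$ on the curve $C_\Sigma$ (here one uses that $f_{\Sigma}|_M$ is a regular first integral); analytic continuation of the relation is therefore governed by continuation of the $\theta_\Sigma$ on the target curves, and the points of $\P^1$ over which $\theta_\Sigma$ may be singular are exactly those whose fibres support the degeneracies shared with another member of the web --- which is what the sets $\PP_{\overline\Sigma}$ and the resonance closure $\overline\A$ are designed to record. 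Around a tangency component not contained in $\overline\A$ (such as the cubic above) the local system of abelian relations extends with trivial monodromy, and this is the actual content of the specialization of \cite[Theorem 1.2.2]{PTese}; it is not a consequence of an inclusion of the discriminant into $\overline\A$.
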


If $\Sigma$ is an irreducible component of $\R^1(\A)$ we define its closure $\overline \Sigma$ as
the maximal isotropic subspace of $H^1(\underline M)$ containing the image of $\Sigma$ under the
natural inclusion $H^1(M) \to H^1(\underline M)$. The {\bf closure of the resonance variety} of  $\A$
is the subvariety of $H^1(\underline M)$  defined as
\[
\overline{  \R^1(\A)} = \bigcup_{\Sigma \subset \R^1(\A)} \overline{\Sigma} \, .
\]
Notice that  $\overline{\R^1(\A)} \subset \R^1( \overline \A)$ but the equality does not hold in general.

To each irreducible component $\overline{\Sigma} \subset \overline{\R^1(\A)}$ there is a set of points of $\P^1$,  $\PP_{\overline \Sigma}\supset \PP_{\Sigma}$
with complement $\underline{C_{{\Sigma}}}$ satisfying
$
 (f_{\Sigma})^* H^1( \underline{C_{{\Sigma}}}) = \overline{\Sigma}.
$
By analogy with what has been done  in Sections \ref{S:logar} and \ref{S:polylogar}, we can consider
\begin{eqnarray*}
\overline{\Psi}_i : \bigoplus_{\overline{\Sigma}} H^1(\underline{C_{{\Sigma}}})^{\otimes i} &\longrightarrow& H^1(\underline M)^{\otimes i} \\
(\eta_{\Sigma} ) &\mapsto& \sum f_{\Sigma} ^* \eta_\Sigma\, .
\end{eqnarray*}
and define, for every $i \in \mathbb N$,  $\overline{ \Log^i }\W(\A) )  = \ker \overline{\Psi}_i$. We also define $\overline{\Log^{\infty} } \W(\A)$ as the direct sum $\oplus_i\overline{ \Log^i } \W(\A) $.

\smallskip

Of course, we also have a Proposition $\overline{\ref{P:trivial2}}$:

\begin{prop}\label{P:trivial2barra}
If  $\W$ is the localization of the  web $\W(\A)$
at a generic point of $M$ then  the vector space $\overline{ \Log^{\infty} } \W(\A)$ embeds into the space of abelian relations
of   $\W$. Moreover, the  analytic continuation of this embedding gives rise to a local system of abelian relations
globally defined on $\underline{M}$.
\end{prop}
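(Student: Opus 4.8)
The plan is to mimic the proof of Proposition \ref{P:trivial2} almost verbatim, replacing $M$ by $\underline{M}$, $C_{\Sigma}$ by $\underline{C_{\Sigma}}$, and $\Psi_i$ by $\overline{\Psi}_i$ throughout, and then to explain the one genuinely new ingredient: why the pull-backs $f_{\Sigma}^*\eta_{\Sigma}$ of iterated integrals on $\underline{C_{\Sigma}}$ still make sense as (multivalued) functions on $\underline{M}$, and why they are locally constant along the leaves of $\mathcal F_{\Sigma}$. First I would recall that for each irreducible component $\overline{\Sigma}$ of $\overline{\R^1(\A)}$, the very definition of $\underline{C_{\Sigma}}$ says that $f_{\Sigma}^* H^1(\underline{C_{\Sigma}}) = \overline{\Sigma} \subset H^1(\underline{M})$; in particular $f_{\Sigma}$ restricts to a morphism $M \to C_{\Sigma}$ which extends to a rational map whose pull-back of the logarithmic $1$-forms on $\P^1$ with poles in $\PP_{\overline{\Sigma}}$ are logarithmic $1$-forms on $\P^n$ with poles in $\overline{\A}$, hence holomorphic on $\underline{M}$. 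Since Chen's integrability conditions are automatic for collections of $1$-forms on a curve, every element of $H^1(\underline{C_{\Sigma}})^{\otimes i}$ lies in $B^i(\underline{C_{\Sigma}})$, so it defines a holomorphic function on the universal covering of $\underline{C_{\Sigma}}$; composing with $f_{\Sigma}$ gives a holomorphic function on the universal covering of $\underline{M}$, whose differential is a closed $1$-form that is constant along the fibers of $f_{\Sigma}$, i.e.\ along the leaves of $\mathcal F_{\Sigma}$.

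Next I would address the sum-to-zero condition exactly as in Proposition \ref{P:trivial2}: if $(\eta_{\Sigma}) \in \ker \overline{\Psi}_i$, then applying the iterated-integral functional and using the multiplicativity/shuffle formalism recalled in Section \ref{S:Chen} (together with the fact that $\overline{\Psi}_i$ is $\pi_1$-equivariant, so that the relation is preserved under analytic continuation), the functions on the universal covering of $\underline{M}$ coming from the components of $(\eta_{\Sigma})$ sum to zero; taking differentials produces an abelian relation of the germ $\W$ of $\W(\A)$ at a generic point of $M$. Here one must note the subtlety that although $\W(\A)$ was originally defined on $M$ (not on $\underline{M}$), the abelian relations we produce are closed $1$-forms defined a priori on (the universal cover of) $\underline{M}$, and they restrict to honest abelian relations of $\W_x$ for $x$ in the dense open subset of $M$ where $\W$ germifies to a codimension-one web; this is the content of the ``moreover'' clause, which follows from \cite[Theorem 1.2.2]{PTese} applied to $\overline{\A}$ precisely as in the (unnumbered) Proposition preceding this statement. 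Injectivity of the embedding is the same argument as before: two distinct elements of $\bigoplus_i H^1(\underline{C_{\Sigma}})^{\otimes i}$ yield distinct functions on the universal covering of $\underline{C_{\Sigma}}$ by Chen's injectivity theorem (Section \ref{S:Chen}), applied to the subspace $H^1(\underline{C_{\Sigma}}) \subset \Omega^1(\underline{C_{\Sigma}})$, which consists of closed $1$-forms with no nonzero exact form; since the $f_{\Sigma}$ are dominant, pull-back is injective, so the images are distinct functions on the universal covering of $\underline{M}$, hence have distinct differentials.

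The only real obstacle, and the one I would spend a sentence or two pinning down, is the compatibility of the analytic continuations: a priori each $f_{\Sigma}^*\eta_{\Sigma}$, as an iterated integral, continues as a multivalued function on $\underline{M}$, and one needs the relation $\sum_\Sigma f_{\Sigma}^*\eta_{\Sigma} = 0$ to persist along every loop in $\underline{M}$, not merely near the chosen generic base point. This follows because the monodromy action on the iterated integrals $f_{\Sigma}^*\eta_{\Sigma}$ is computed inside the terms $F^i(\underline{M})$ of Chen's filtration, which are $\pi_1(\underline{M})$-submodules (Section \ref{S:Chen}), and because $\overline{\Psi}_i$ is a morphism of $\pi_1(\underline{M})$-modules once one identifies $H^1(\underline{C_{\Sigma}})^{\otimes i} \subset F^i(\underline{C_{\Sigma}})$ via $f_{\Sigma}^*$ — so $\ker\overline{\Psi}_i$, and the resulting vanishing of the sum, is monodromy-invariant. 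This is what upgrades the pointwise embedding to an embedding of local systems globally defined on $\underline{M}$, completing the proof.
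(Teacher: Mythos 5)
Your proposal is correct and takes essentially the same route as the paper: the paper gives no separate proof of Proposition~\ref{P:trivial2barra}, simply asserting (``Of course, we also have\dots'') that it follows by the argument of Proposition~\ref{P:trivial2} with $M$, $C_{\Sigma}$, $\Psi_i$ replaced by $\underline{M}$, $\underline{C_{\Sigma}}$, $\overline{\Psi}_i$, which is exactly what you do. Your added checks --- that $f_{\Sigma}^*H^1(\underline{C_{\Sigma}})=\overline{\Sigma}\subset H^1(\underline{M})$ so the pulled-back forms are holomorphic on $\underline{M}$, and that the vanishing of the sum is preserved under analytic continuation via the $\pi_1$-invariance of Chen's filtration --- are details the paper leaves implicit, and they are correctly handled.
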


Of course, we can write lower bounds for $\overline{\Log^{i}} \W(\A)$  analogous to the ones given by Corollary \ref{C:crude}.

\smallskip

If $\A= \{ C_1, \ldots, C_r \}$ is an  arrangement of  curves on $\P^2$ it is still a simple matter to determine the dimension of the second cohomology group
of the complement. If $t_i(\A)$  denotes
the number of singular points in the support of $\A$ through which there exactly $i$ branches, and  $\chi(\widetilde{C_i})$ ( $i= 1, \ldots,r$ )
are the Euler characteristic of the normalizations of the curves in the arrangement then  \cite[Proposition 2.4]{Cogo},
\begin{equation}
h^1(M)= r-1 \, \text{  and } \,  \, h^2(M) = 1 + \sum_{i=2}^{\infty}  (i-1)t_i(\A) - \sum_{i=1}^r (  \chi(\widetilde {C_i}) -1  ).
\end{equation}
We can use these observations (together with some computer algebra) to recover the following (unpublished) joint result of Pirio and Robert.

\begin{thm}
The resonance web of $\K_5$ is exceptional.
\end{thm}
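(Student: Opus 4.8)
The statement asserts that $\W(\K_5)$ is exceptional, meaning it has maximal rank and is not algebraizable. The plan is to prove these two properties separately, in that order, leaning on the machinery developed in Sections~\ref{S:poly} and~\ref{S:examples}.

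\emph{Step 1: compute the combinatorics of $\K_5$ and its resonance closure.} First I would record that $\K_5$ is the arrangement of the $\binom{5}{2}=10$ lines spanned by $5$ points in general position in $\P^2$, so that $r=10$ and $h^1(M)=r-1=9$. From the description already given in the text, $\R^1(\K_5)$ has exactly $10$ irreducible components: $5$ of dimension $3$ (pencils of lines through the $5$ points) and $5$ of dimension $2$ (pencils of conics through $4$ of the $5$ points). Hence $\W(\K_5)$ is a $10$-web, and by the work on the closure, $\overline{\K_5}=\K_5\cup\{C\}$ where $C$ is the conic through the $5$ points, so $\underline{M}$ is the complement of $11$ curves (ten lines and one conic). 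Using the formula $h^2(\underline M)=1+\sum_{i\ge 2}(i-1)t_i-\sum(\chi(\widetilde{C_i})-1)$ recalled just before the statement, I would tabulate the singular points of $\overline{\K_5}$: the $5$ original quadruple points (each on $4$ lines and the conic, so $5$ branches), the $\binom{5}{2}\cdot$(intersection points of non-concurrent lines) that survive as double points, and the tangency/secant structure of $C$ with the ten lines. With $\chi(\widetilde{C_i})=2$ for each line ($=\P^1$) and $\chi(\widetilde C)=2$ for the conic, this gives an explicit number for $h^2(\underline M)$.

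\emph{Step 2: maximal rank.} By Bol's bound (Corollary~\ref{C:bound}(1)) a planar $10$-web has rank at most $\binom{9}{2}=36$. To show $\rank(\W(\K_5))=36$ I would exhibit $36$ independent abelian relations living in $\overline{\Log^{1}}\W(\K_5)\oplus\overline{\Log^{2}}\W(\K_5)$, using Proposition~\ref{P:trivial2barra} and the closure versions of the bounds in Corollary~\ref{C:crude}. Concretely: $\overline{\Log^1}$ has dimension at least $\sum_\Sigma\dim H^1(\underline{C_\Sigma})-h^1(\underline M)$, and $\overline{\Log^2}$ has dimension at least $\sum_\Sigma(\dim H^1(\underline{C_\Sigma}))^2-\dim N^2(\underline M)$ with $\dim N^2(\underline M)=h^1(\underline M)^2-h^2(\underline M)$ since $H^\bullet(\underline M)$ is generated in degree one (this holds because $\overline{\K_5}$, being a union of smooth curves meeting the conic transversally or tangentially, still has a complement with this property — or one checks it by direct computer computation as the text hints). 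Here each $\underline{C_\Sigma}$ is $\P^1$ minus the points $\PP_{\overline\Sigma}$; for the $5$ conic-pencils the closure adds the extra line-members so that $\dim H^1(\underline{C_\Sigma})$ jumps, and for the $5$ line-pencils similarly. I would plug in the explicit point-counts from Step~1 and verify that the two lower bounds sum to exactly $36$, forcing equality with Bol's bound.

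\emph{Step 3: non-algebraizability.} An algebraic planar $10$-web is dual to a plane curve of degree $10$; its leaves lie on the members of a linear system, and in particular its foliations have rational first integrals of a coherent bounded degree. To rule this out I would use the hallmark of algebraizability in the planar case (Bol, for the relevant regime, or the linearization criterion): an algebraizable web is in particular \emph{linearizable}, its leaves are straight lines after a change of coordinates. But four of the ten foliations of $\W(\K_5)$ — no, all five line-pencils — already have straight leaves, while the five conic-pencils have leaves that are genuine conics; a single projective (or even birational) change of coordinates cannot simultaneously straighten five pencils of conics with distinct base loci while preserving the straightness of the line-pencils. I would make this precise by invoking the curvature/linearization obstruction: compute the Blaschke curvature of a suitable $3$-subweb consisting of one conic-pencil and two line-pencils and show it is nonzero, so the web is not flat near those foliations; alternatively, cite the general principle from \cite{PLinear}-type arguments that a web containing incompatible pencils of conics is non-linearizable, hence non-algebraizable. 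Combined with Step~2 this yields exceptionality.

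\emph{Main obstacle.} The delicate point is Step~2: getting the singularity data of $\overline{\K_5}$ exactly right (the tangency behaviour of the conic $C$ with the ten lines, and how many of the $\binom{10}{2}$ line-line intersections coincide at the $5$ quadruple points) so that $h^2(\underline M)$ comes out to the precise value needed, and then checking that $\overline{\Psi}_1$ and $\overline{\Psi}_2$ are as degenerate as the dimension count requires (i.e.\ the lower bounds of Corollary~\ref{C:crude} are actually attained, not merely bounds). This is exactly where the text says ``some computer algebra'' is used, and I expect to lean on an explicit symbolic computation of $H^\bullet(\underline M)$ and of the maps $\overline{\Psi}_i$ to confirm the count.
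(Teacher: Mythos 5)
There is a genuine gap in Step 2: the weight-one and weight-two polylogarithmic relations do not add up to $36$. With the closure $\overline{\K_5}=\K_5\cup\{C\}$ one has $h^1(\underline M)=10$, $h^2(\underline M)=25$, and all ten closed components $\overline\Sigma$ have dimension $3$ (the five line-pencils already do; the five conic-pencils jump from $2$ to $3$ because $C$ is an extra completely decomposable member — note they do \emph{not} jump ``similarly'': the conic is not a fiber of any line-pencil). This gives $\dim\overline{\Log^1}\W(\K_5)\ge 10\cdot 3-10=20$ and $\dim\overline{\Log^2}\W(\K_5)\ge 10\cdot 9-(10^2-25)=15$, for a total of $35$, one short of Bol's bound $\binom{9}{2}=36$. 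These two inequalities are in fact equalities, so no amount of care with the singularity data of $\overline{\K_5}$ will close the gap at weights $1$ and $2$. The missing abelian relation lives in $\overline{\Log^3}\W(\K_5)$, and the paper obtains $\dim\overline{\Log^3}\W(\K_5)=1$ by a brute-force computation of the kernel of a $3375\times 243$ matrix. Your plan omits the weight-three contribution entirely, so it cannot establish maximal rank. (A side remark: for the lower bound on the rank you only need the inequalities of Corollary \ref{C:crude}, not that they are attained, so your worry about verifying equality of $\overline{\Psi}_1,\overline{\Psi}_2$ is misplaced — the real computational burden is at weight $3$.)

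Step 3 is also not a proof as written. Linearizability is tested against arbitrary local biholomorphisms, not projective or birational changes of coordinates, so the observation that no projective map straightens five pencils of conics while preserving five pencils of lines proves nothing; and computing a Blaschke curvature of a $3$-subweb only obstructs flatness of that subweb, which is neither necessary nor sufficient for non-algebraizability of the full $10$-web. The paper disposes of this step by a direct appeal to \cite[Proposition 2.1]{cdql}, which applies precisely because $\W(\K_5)$ is a maximal-rank web containing several pencils of lines together with non-linear foliations. If you want a self-contained argument you would need to quote and verify the hypotheses of that criterion (or of the linearization results in \cite{PLinear}), not gesture at incompatibility of the pencils.
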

\begin{proof}
Notice that the closures of the irreducible components of $\R^1(\K_5)$ all have dimension $3$.
As $h^1(\underline M) = 10$, we obtain the lower bound
\[
\overline{\Log^1} \W(\K_5)  \ge 10 \cdot 3 - h^1(\overline M) = 30 - 10 =20 \, .
\]
To control $\overline{\Log^2} \W(\K_5)$, we need to know the dimensions of  $N^2(\underline M)= \ker \{ H^1(\underline M)^{\otimes 2} \to H^2(\underline M)$.
 For an arbitrary arrangement of curves the cohomology algebra of the complement has no reason to be  generated
in degree $1$, but for $\underline M= \P^2 \setminus \overline{\K_5}$, \cite[Theorem 2.46]{Cogo} implies it is the case.
Thus
\[
\overline{\Log^2} \W(\K_5)  \ge 10 \cdot 9 - h^1(\overline M)^2 + h^2(\overline M) = 90 - 100 + 25 = 15 \, .
\]
The inequalities above turn out to be equalities. To determine the dimension of $\overline{\Log^3} \W(\K_5)$ one has {\it just} to
compute the dimension of the kernel of a $3375 \times 243$ matrix.
A brute-force calculation\begin{footnote}{Maple script  available at \url{www.impa.br/~jvp/artigos.html}.}\end{footnote} shows that
\[
\dim \overline{\Log^3} \W(\K_5) =1 .
\]
Thus $\W(\K_5)$ attains Bol's bound $9\cdot 8 / 2=36$. To prove it is non-algebraizable it suffices to apply \cite[Proposition 2.1]{cdql}.
\end{proof}

Notice that $\K_5$ is indeed a $2$-parameter family of $10$-webs as  the moduli space of isomorphisms classes of  $5$
point on $\P^2$ has dimension $2$.

\subsection{Rational abelian relations}

The inclusion of $\overline{ \Log^{\infty}} \W(\A)$ into $\mathcal A (\W(\A))$ does not exhaust the space of abelian relations of $\W(\A)$ in general, unlike when $\A = \A_{0,5}$ or $\A = \K_5$.
The simplest example is when $\A$ is an arrangement of $9$ lines with $3$ aligned threefold intersection points and all the other intersections are ordinary.
In this case the resonance variety has only three local components, each of them having dimension two.
Supposing that these three points are $(0:1:0), (1:0:0)$, and $(1:1:0)$ then the corresponding foliations are defined by the $1$-forms $dx$, $dy$, and $dx+dy$.
As they satisfy $ (dx) +  (dy) - (dx+dy) = 0$, it is clear that $\W(\A)$ has rank one but $\dim \overline{ \Log^{\infty}} \W(\A)=0$.

Define  $ {\Rat} \W(\A)$ as the kernel of the linear  map
\begin{eqnarray*}
\overline{\Upsilon} : \bigoplus_{{\Sigma}} \frac{\mathbb C(\P^1)}{\C} &\longrightarrow& \frac{\C(\P^n)}{\C} \\
( g_{\Sigma}  ) &\mapsto& \sum f_{\Sigma} ^* (g_\Sigma)\, .
\end{eqnarray*}
where $\C(\P^n)$ stands for the field of rational functions on $\P^n$, and  the summation $\Sigma$ run over all the irreducible
components of $\R^1(\A)$. Coordinate-wise differentiation injects  $\ker \Upsilon$ into $\mathcal A(\W(\A))$. Notice that its  image intersects $\overline{\Log^{\infty}} \W(\A)$ only at zero. Therefore, we have the following lower bound for the rank of $\W(\A)$
\begin{equation}\label{E:lowerrat}
\rank (\W(\A)) \ge \dim \overline{\Log^{\infty}} \W( \A) + \dim  {\Rat} \W(\A) \, .
\end{equation}

We do not know how to give general lower bounds for $\dim  {\Rat} \W(\A)$. We have only the following
simple result.

\begin{lemma}\label{L:ratb}
Let $\A$ be a planar arrangement, and  let $ \{ \ell_1, \ldots, \ell_m\}$ be the lines in its closure. If
$n_i$ is  the number of local irreducible components $\overline{\Sigma} \subset \overline{\R^1}(\A)$ containing
$\ell_i$ in its support then
\[
\dim  {\Rat} \W(\A) \ge \sum_{i=1}^n \frac{ (n_i -1 ) (n_i -2) } {2} \, .
\]
\end{lemma}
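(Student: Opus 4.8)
The plan is to produce, for each line $\ell_i$ in the closure of $\A$, a space of rational abelian relations of dimension exactly $\binom{n_i-1}{2}$ supported on the $n_i$ foliations whose first integral $f_{\Sigma}$ contracts $\ell_i$, and then to argue that the contributions coming from distinct lines are linearly independent. First I would fix a line $\ell_i$ and let $\Sigma_1, \ldots, \Sigma_{n_i}$ be the irreducible components of $\overline{\R^1}(\A)$ whose associated map $f_{\Sigma_j} : \P^2 \dashrightarrow \P^1$ contracts $\ell_i$ to a point $p_j \in \P^1$. After composing each $f_{\Sigma_j}$ with a Möbius transformation of $\P^1$ we may assume $p_j = \infty$ for every $j$, so that each $f_{\Sigma_j}$ is a genuine rational function on $\P^2$ that is regular (finite) along the generic point of $\ell_i$ and takes distinct values there — indeed the values $f_{\Sigma_j}|_{\ell_i}$ are $n_i$ pairwise distinct affine-linear (or rational) functions on $\ell_i$, because the $f_{\Sigma_j}$ define pairwise distinct foliations. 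The key local picture is thus: we have $n_i$ submersions $f_1, \ldots, f_{n_i}$ whose restrictions to $\ell_i$ are $n_i$ distinct coordinates on $\ell_i \cong \P^1$.

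Next I would invoke exactly the mechanism behind Bol's bound (Corollary \ref{C:bound}(1)) in its ``functional equation'' form. A rational abelian relation supported on $f_1, \ldots, f_{n_i}$ is a tuple $(g_1, \ldots, g_{n_i})$ of rational functions on $\P^1$ with $\sum_j f_j^* g_j \equiv 0$ in $\C(\P^2)/\C$. Restricting such an identity to $\ell_i$ and writing $g_j$ as having a pole of some order along $\infty = p_j$, the relation $\sum_j g_j(f_j|_{\ell_i}) = 0$ on the curve $\ell_i$ becomes a vanishing condition for a sum of rational functions of $n_i$ distinct linear arguments. The standard abelian-relation counting (the very computation in Proposition \ref{P:bound} applied to the trivial $n_i$-web on the line $\ell_i$, i.e. $\ell^a = \min(n_i, a+1)$) shows the space of such relations with controlled pole order is at least $\sum_{a\ge 1}\max(0,n_i - a - 1) = \binom{n_i-1}{2}$; concretely these are realized by taking $g_j$ to be suitable partial fractions, e.g. the well-known family built from $\{f_j, f_j^2, \ldots\}$ together with the $\binom{n_i-1}{2}$ independent linear syzygies among the $j$-th powers of $n_i$ distinct linear forms. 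Each such relation on $\ell_i$ lifts back, via the $f_j$, to an honest element of $\Rat\,\W(\A)$: since the $g_j$ are rational on $\P^1$ and $\sum f_j^* g_j$ is a rational function on $\P^2$ that vanishes on the divisor $\ell_i$ and is contracted by each of the relevant foliations, a degree/pole bookkeeping forces it to vanish identically.

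Finally I would establish independence across the different lines $\ell_1, \ldots, \ell_m$. Here the point is that the relation produced from $\ell_i$ is ``genuinely concentrated'' near $\ell_i$: its component $g_j$ on a foliation $\Sigma_j$ has a pole precisely at the value $f_{\Sigma_j}(\ell_i) \in \P^1$, and for two distinct lines $\ell_i \neq \ell_{i'}$ the points $f_{\Sigma_j}(\ell_i)$ and $f_{\Sigma_j}(\ell_{i'})$ are distinct on $\P^1$. Hence a linear combination of relations coming from different lines, if it vanished, would have to cancel poles located at distinct points of the various $\P^1$'s, which is impossible unless all coefficients are zero; this is the same principle that makes the image of $\Rat\,\W(\A)$ meet $\overline{\Log^\infty}\W(\A)$ only at $0$. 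Summing over $i$ then gives $\dim \Rat\,\W(\A) \ge \sum_{i=1}^m \binom{n_i-1}{2}$, which is the claimed bound (with the harmless convention that lines appearing in at most two components contribute $0$). The main obstacle I anticipate is the bookkeeping in the lifting step — checking that each local functional relation on $\ell_i$ really extends to a global rational identity on $\P^2$ without introducing spurious poles along the other lines of $\overline\A$ — and making the cross-line independence argument fully rigorous by tracking the divisor of poles of each component $g_\Sigma$; both are elementary but require care with the resonance closure $\overline\A$.
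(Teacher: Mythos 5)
Your proposal ultimately lands on the same mechanism as the paper: syzygies among the $p$-th powers of the $n_i$ affine-linear first integrals of the local pencils whose base points lie on $\ell_i$. Your count $\sum_{p\ge 1}\max(0,n_i-p-1)=\binom{n_i-1}{2}$ and the cross-line independence via distinct polar divisors are both correct. The paper's (one-line) proof is exactly this: send $\ell_i$ to infinity, so the relevant foliations are given by $dh_1,\dots,dh_{n_i}$ with $h_j$ affine-linear forms, and for each $p\ge 1$ every element of the kernel of $(a_j)\mapsto\sum_j a_j h_j^p$ (a map into the $(p+1)$-dimensional space of degree-$p$ forms in two variables) yields the rational abelian relation $(a_1h_1^p,\dots,a_{n_i}h_{n_i}^p)$, whose polar set is $\ell_i$.

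However, the ``key local picture'' you set up is wrong, and the step built on it would fail. Since $\ell_i$ lies in the support of $\overline{\Sigma}_j$, the line $\ell_i$ is a \emph{fiber} of $f_{\Sigma_j}$, so $f_{\Sigma_j}|_{\ell_i}$ is the constant $p_j$: it is not one of ``$n_i$ distinct coordinates on $\ell_i$'', and after normalizing $p_j=\infty$ the function $f_{\Sigma_j}$ has a pole along $\ell_i$ rather than being finite there. Consequently ``restricting the identity $\sum_j f_j^*g_j=0$ to $\ell_i$'' produces either a sum of constants or an undefined expression, not a functional equation on the curve $\ell_i$, and Proposition \ref{P:bound} applied to a web \emph{on} $\ell_i$ is not the right counting device. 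The correct reading of your own phrase ``syzygies among the $p$-th powers of $n_i$ distinct linear forms'' is that the $f_j$ are distinct linear forms on the affine plane $\P^2\setminus\ell_i$ (they all have a pole along $\ell_i$, hence the poles of the resulting relation sit exactly on $\ell_i$), and the syzygies are identities of polynomials in two variables, which need no further ``lifting'' or pole bookkeeping. Once the restriction step is replaced by this, your argument coincides with the paper's.
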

\begin{proof}Suppose that $\ell_1$ is the line at infinity. The foliations associated to the   $n_1$ components of $\overline{\R^1}(\A)$
containing $\ell_1$ are defined by $d h_1, \ldots, d h_{n_1}$ where $h_i$ is a linear form. To prove the lemma it suffices to observe that for $p\ge 1$, the kernel of the maps
\[
(a_1, \ldots, a_{n_1}) \mapsto \sum_{i=0}^{n_1} a_i (h_i)^p
\]
will correspond to rational abelian relations with polar set contained in $\ell_1$.
\end{proof}

Of course one can do better by considering rational functions with poles on
fibers of $f_{\Sigma}$ for non-local components of $\R^1(\A)$.

\begin{figure}[h]
\begin{minipage}{3.0cm}
\begin{center}
\includegraphics[width=3.2cm,height=3.2cm]{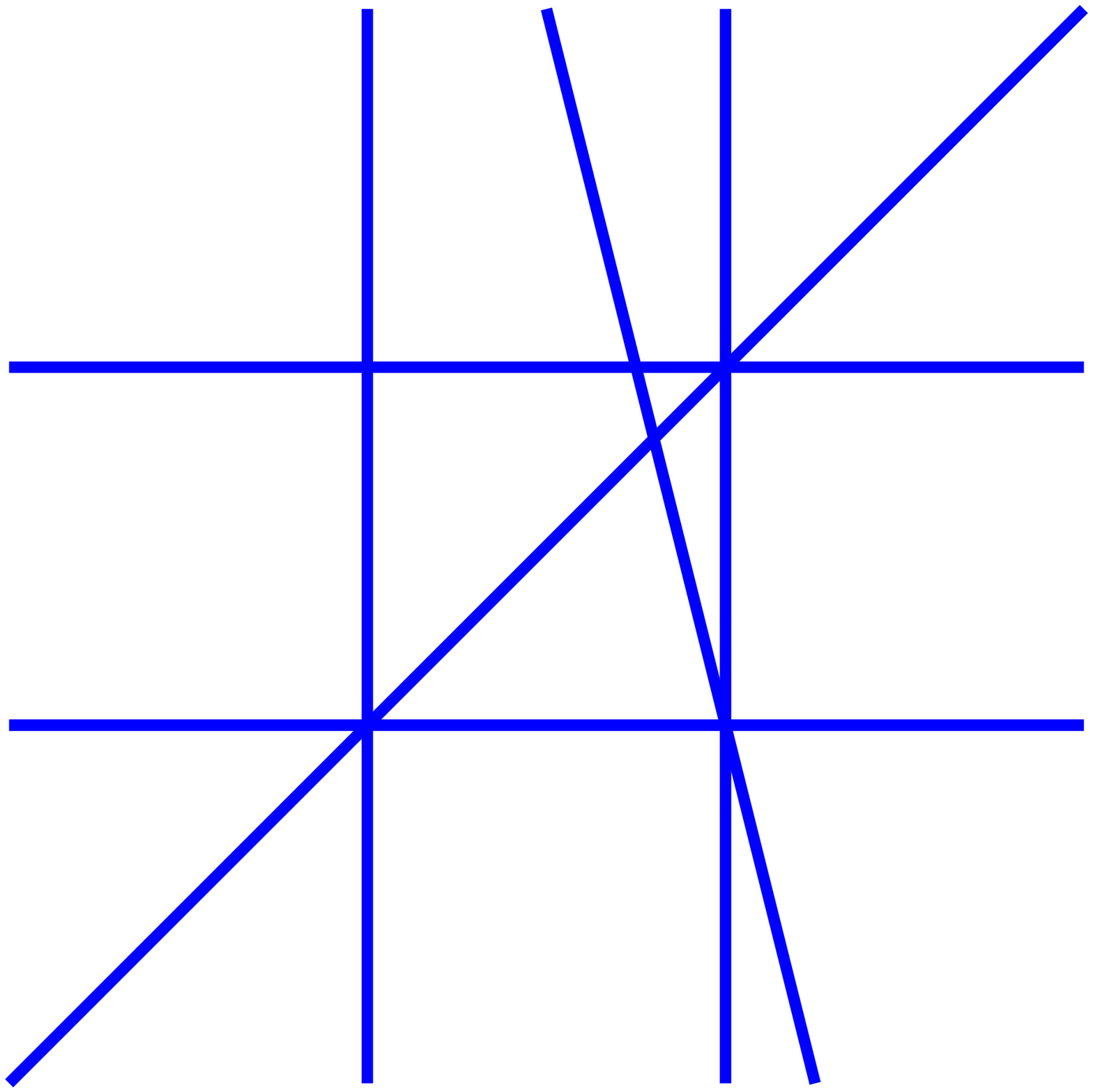}
\end{center}
\end{minipage}
\hfill
\begin{minipage}{3.0cm}
\begin{center}
\includegraphics[width=3.2cm,height=3.2cm]{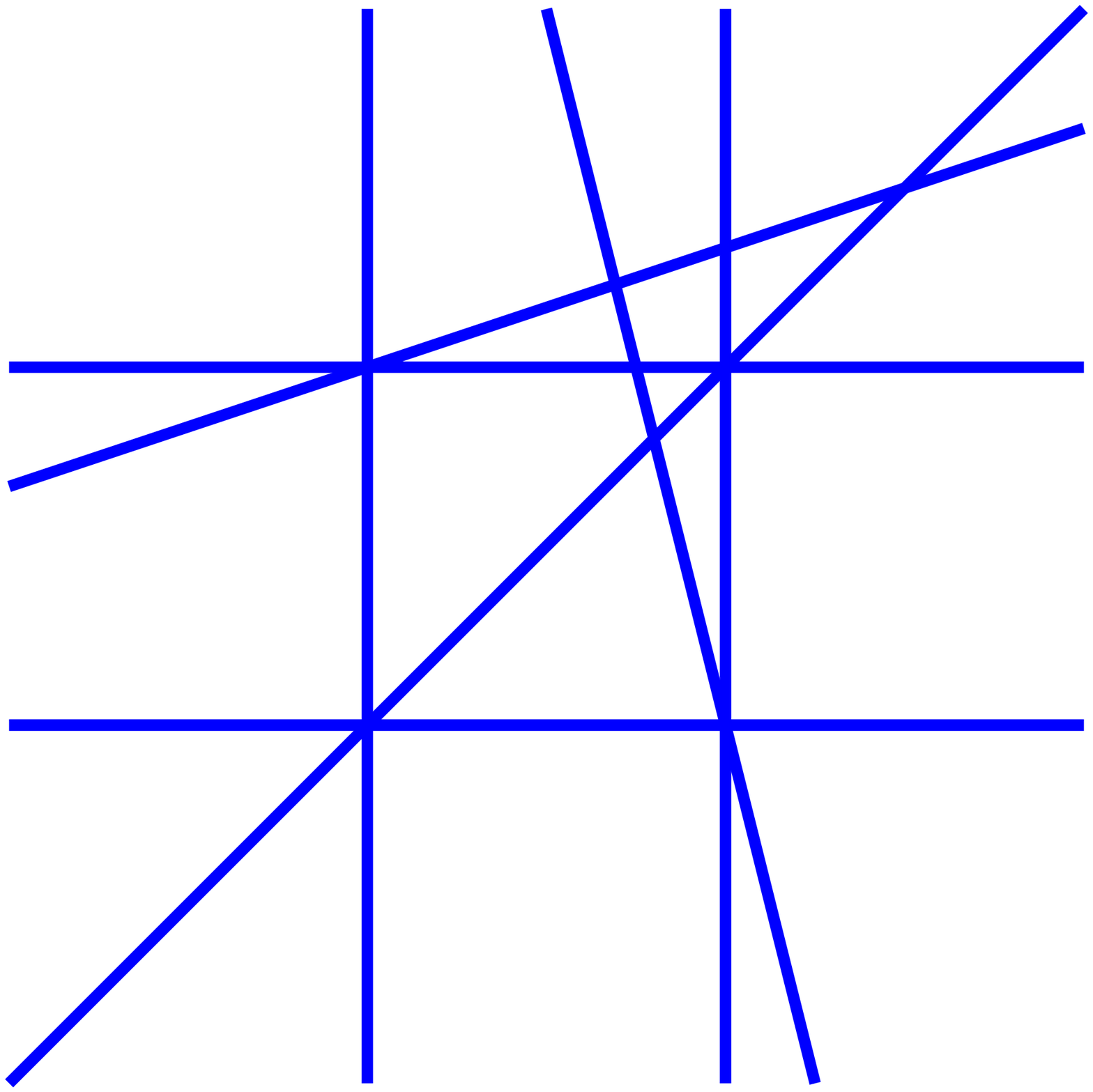}
\end{center}
\end{minipage}
\hfill
\begin{minipage}{3.0cm}
\begin{center}
\includegraphics[width=3.2cm,height=3.2cm]{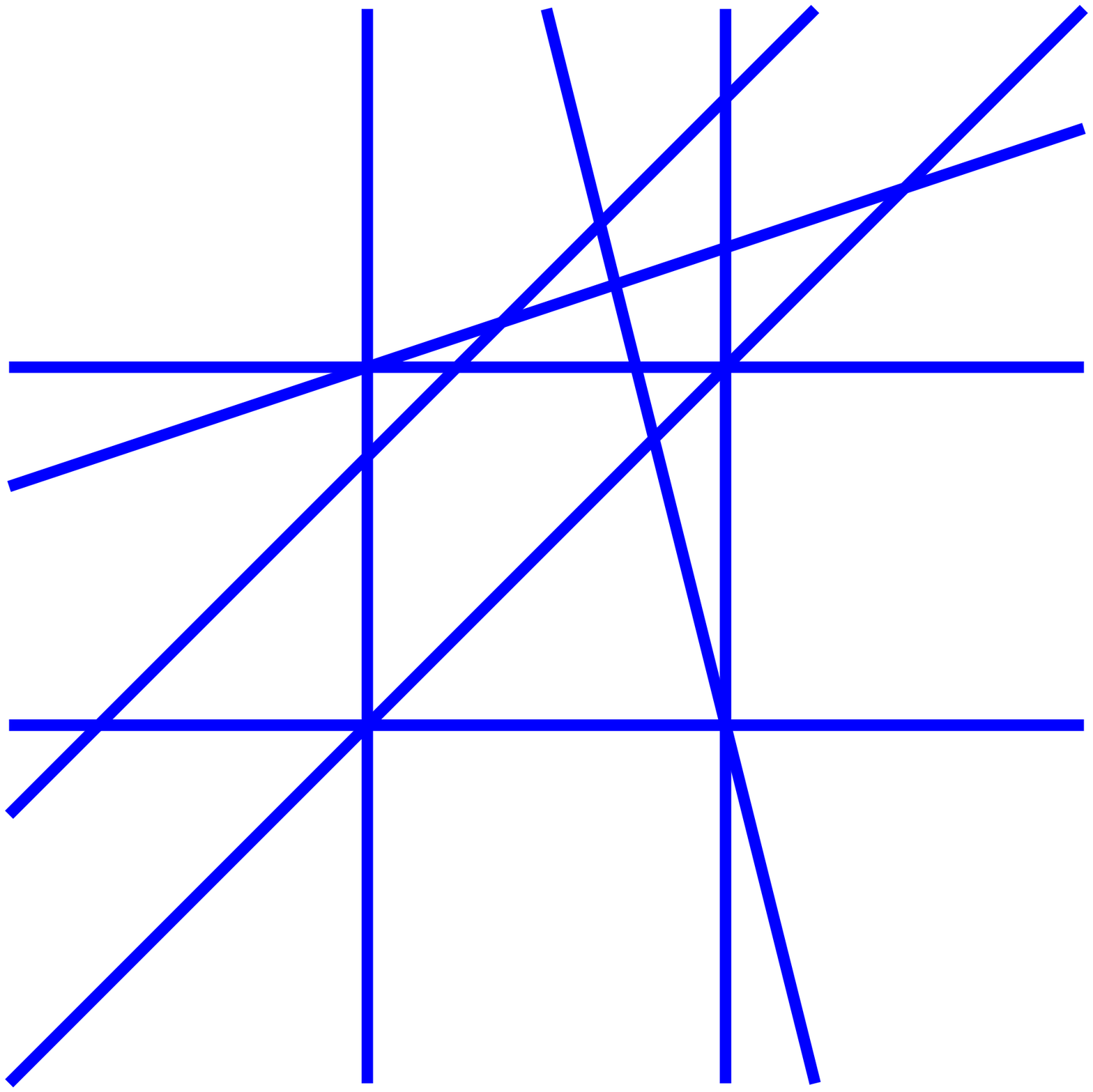}
\end{center}
\end{minipage}
{\caption{Arrangements $\B_6, \B_7$, and $\B_8$. }\label{F:bol678}}
\end{figure}

\begin{example}
Let $\B_{5 + n}$, $n=1,2,3$,  be the arrangements obtained from $\A_{0,5}$ by adding $n$ generic lines, each through a distinct double point, see  Figure \ref{F:bol678}. The resonance webs are the webs $\mathcal B_6$, $\mathcal B_7$ and $\mathcal B_8$ considered by Robert and Pirio, and proved to be exceptional by them.

The rank of $\mathcal B_6$ can be computed easily as follows. As it contains $\A_{0,5}$ it has rank at least $6$. Adding the pencil of lines
through a double point of the support of $\A_{0,5}$ we have that $\overline{\B_6} = \B_6$ contain two lines with three triple points, thus Lemma
\ref{L:ratb} implies $\dim \Rat \W(\B_6) \ge 2$. The proof of Theorem \ref{T:1} tell us that $\Psi_1 : \oplus H^1(C_{\Sigma})\to H^1(M)$
and $\Psi_2 : \oplus H^1(C_{\Sigma})^{\otimes 2} \to H^1(M)^{\otimes 2}$, with $\Sigma \subset \R^1(A_{0,5})$,  are both surjective. If $h_1, h_2$ are the equations of the lines intersecting at the double point under consideration then $d \log h_1 - d \log h_2$ and $(d \log h_1 - d \log h_2)^{\otimes 2}$
belong (respectively) to the image of $\Psi_1$ and $\Psi_2$. Thus
\[
\dim \Log^{ \infty} \W(\B_6) \ge \dim \Log^{\infty} \W(\A_{0,5}) + 2 \, .
\]
Putting all together we deduce that $\rank \W(\B_6) \ge 6 + 2 + 2 = 10$. Thus  $\mathcal B_6 = \W(\B_6)$ is of maximal rank as it  attains Bol's bound.

One can deal similarly with $\B_7$ and $\B_8$ but Lemma \ref{L:ratb} does not suffice. One has to
consider also rational first integrals for the foliation associated to the non-local component of $\R^1(\B_{6})$ with
poles on one (for $\B_7$) or three (for $\B_8$) fibers.
\end{example}

For general arrangements of lines, the inclusion  $$\overline{ \Log^{ \infty} } \W(\A) \oplus { \Rat^{ \infty} } \W(\A)  \subset \mathcal A ( \W(\A))$$
is strict. In the next two sections we will consider two examples of line arrangements on $\P^2$  with resonance webs having  abelian relations
which support this claim.

\subsection{Mixed abelian relations}

In the same way that we looked for abelian relations among collections of iterated integrals of logarithmic $1$-forms we
can also look for them in collections of iterated integrals of arbitrary rational $1$-forms. In this section we will consider
a one-parameter family of arrangements with resonance webs having abelian relations  of this form.

\begin{figure}[h]
\begin{center}
\includegraphics[width=4.0cm,height=4.0cm]{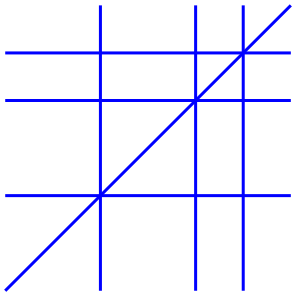}
\end{center}
{\caption{The arrangement $\textsf P$.}\label{F:PP}}
\end{figure}

\begin{example} For $\lambda \in \C \setminus \{ 0, 1\}$ , let $\textsf P = \P_{\lambda}$ be the arrangement of $8$ lines obtained form $\A_{0,5}= \{ x y z ( x-y) (x-z) (y-z)=0\} $ by adding the two extra lines $\{ (x-\lambda z) ( y-\lambda z ) = 0\}$, see Figure \ref{F:PP}. Its resonance variety has $8$ irreducible components: two local of dimension three; three local of dimension two; and three non-local of dimension two determined by pencils of conics. It is closed arrangement in the sense that $\textsf P = \overline{ \textsf P}$.

The family of webs $\W(\PP_{\lambda})$ have been studied in \cite{PirioSel, PTese}. There explicit generators for the corresponding spaces
 of abelian relations are presented.

From Corollary \ref{C:crude} we see that
\[
\dim \Log^1 \W(\PP) \ge  11 \text{ and } \dim \Log^2 \W(\PP) \ge  5 \, .
\]
These inequalities are indeed equalities. Moreover, one can verify that $\dim \Rat \W(\PP)=4$. Thus we have at least a $20$-dimensional
subspace of the space of abelian relations.

There is still one extra abelian relation, relation $G_{21}^{a}$ in Section 3.3 of \cite{PirioSel}, involving the pull-backs under $f_{\Sigma}$ of
\[
 d \left( \frac{\log (x)  }{1-x } \right) = -d \log(x-1)+d \log (x) + {\frac{\log (x)}{(1-x)^2} dx} \, .
\]
The last summand is neither the differential of a  rational function, nor an iterated integral of
logarithmic $1$-forms. Indeed it  can be written as $\frac {dx}{x}\otimes \frac{dx}{(1-x)^2}$, and therefore is an iterated integral of rational $1$-forms.
\end{example}

\subsection{Twisted  logarithmic abelian relations}

Of course the class of  abelian relations with components being iterated integrals of rational $1$-forms encompass all previous classes
of abelian relations considered. Note that if we consider all the abelian relations in this class we obtain an unipotent local system
over the complement of the closure of the arrangement.  We believe that the maximal unipotent local system in $\mathcal A(\W(\A))$
is exactly the one generated by the abelian relations given by iterated integrals of rational $1$-forms.

Our last example shows that the local system $\mathcal A(\W(\A))$ is not in general unipotent.

\begin{example}\label{E:nonfano} Let $\FF$ be the non-Fano arrangement
presented in Figure \ref{F:SK}. It is a closed arrangement ( $ \FF = \overline F$ )  and its resonance variety has $9$ irreducible components: six of them are local of dimension two, and three of them are determined pencil of conics and also have dimension two. The resonance web $\W(\FF)$ is the so called
Spence-Kummer exceptional $9$-web and was
studied independently by Pirio \cite{PirioSel} and Robert \cite{Robert}. They proved that $\W(\FF)$ is an exceptional $9$-web.
The reference to Spence-Kummer  comes from the fact that the foliations of the web
are defined, up to a change coordinates, by the   rational functions appearing in Spence-Kummer functional equation for the trilogarithm
\[
\text{Li}_3(z)  = \int \frac{dz}{z} \otimes \frac{dz}{z} \otimes \frac{dz}{1-z} \, .
\]

\begin{figure}[h]
\begin{center}
\includegraphics[width=4.0cm,height=4.0cm]{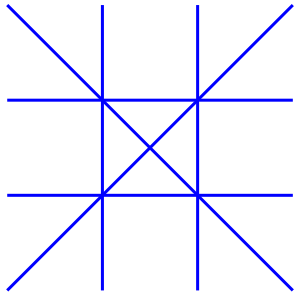}
\end{center}
{\caption{The non-Fano arrangement.}\label{F:SK}}
\end{figure}

Corollary \ref{C:crude} implies
\[
\dim \Log^1 \W(\FF) \ge  12  \text{ and } \dim \Log^2 \W(\FF) \ge  9 \, ,
\]
and Lemma \ref{L:ratb} implies  $\dim \Rat \W(\FF) \ge 4$.
These bounds  are indeed  equalities. Moreover, brute-force computation yields $\dim \Log^3 \W(\FF) =2$.
Thus we have  a $27$-dimensional subspace of the space of abelian relations.

The missing abelian relation comes from the intersection of the irreducible components of
$\Char^1(M)$ determined by the three non-local components. In order to explain this abelian relation we will digress a little.
\end{example}

\begin{center}{\S}\end{center}

Let $\A$ be an arrangement of hypersurfaces on $\P^n$ and $M$ be its complemente. Recall from the introduction the morphism
\begin{align*}
\exp : H^1({ M}) &\longrightarrow \Hom(\pi_1({ M}), \mathbb C^*) \\
a &\longmapsto \left( \gamma \mapsto \exp \left(2\pi i \int_{\gamma} a \right) \right) \,.
\end{align*}
For a $1$-form $\omega \in H^1({ M})$,  let $\varrho_{\omega} : \pi_1({ M}) \to \C^*$ be the representation $\exp(\omega)$ and
$\C_{\omega}$ the corresponding rank one local system.

The $\C$-sheaf $\C_{\omega}$ admits the following resolution
\[
\xymatrixcolsep{1.7pc} \xymatrix{
0 \ar[r] &\C_{\omega}  \ar[r] & \mathcal O_{ M}  \ar[r]^-{\nabla_{\omega}} & \Omega^1({ M})    \ar[r]^{\nabla_{\omega}} & \Omega^2({ M}) \ar[r]^{\nabla_{\omega}} & \cdots \ar[r] &0\,
}
\]
where $\Omega^{\bullet}({ M})$ are the sheaves of holomorphic differentials on ${ M}$ and $\nabla_{\omega} : \Omega^{\bullet}({ M}) \to \Omega^{\bullet +1}({ M})$
is given by the formula $\nabla_{\omega}(\alpha) = d\alpha - \omega \wedge \alpha.$
Since ${ M}$ is Stein, the sheaves $\Omega^{\bullet}({ M})$ are acyclic and consequently
\[
H^i ( { M}, \C_{\omega} ) = \frac{\ker \nabla_{\omega}: H^0({ M}, \Omega^i({ M})) \to H^0({ M},\Omega^{i+1}({ M})) }{ \nabla_{\omega}(H^0({ M},\Omega^{i-1}) )} \, .
\]

If $\omega_1 - \omega_2 = d \log F$ for some $F \in H^0({ M},\mathcal O_{ M})$ then $\mathbb C_{\omega_1} \simeq \mathbb C_{\omega_2}$ and
the corresponding resolutions relate through the diagram
\begin{equation}\label{E:diagrama}
\xymatrixcolsep{1.7pc}
\xymatrix{
0 \ar[r] &\C_{\omega_1}  \ar[r] & \mathcal O_{ M}  \ar[d]^{F^{-1}} \ar[r]^{\nabla_{\omega_1}} & \Omega^1({ M}) \ar[d]^{F^{-1}}   \ar[r]^{\nabla_{\omega_1}} & \Omega^2({ M}) \ar[d]^{F^{-1}} \ar[r]^{\nabla_{\omega_1}} & \cdots  \ar[r] &0\,  \\
0 \ar[r] &\C_{\omega_1}  \ar[r] & \mathcal O_{ M}  \ar[r]^{\nabla_{\omega_2}} & \Omega^1({ M})    \ar[r]^{\nabla_{\omega_2}} & \Omega^2({ M}) \ar[r]^{\nabla_{\omega_2}} & \cdots \ar[r] &0\,
}
\end{equation}
where the vertical arrows are   multiplication by  $F^{-1}$.

For  $\alpha \in \Omega^1(M)$ we have $\nabla_{\omega}(\alpha) = 0$ if and only if the (multi-valued) $1$-form
$
\exp\left( \int \omega \right) \alpha
$
is closed.
Moreover, if $\omega$ belongs to some irreducible component $  \Sigma$  of $ {\R^1}(\A)$ then
for every $\alpha \in   \Sigma$ we have that the $1$-form $\exp\left( \int \omega \right) \alpha$
is closed.

\bigskip

Let $ {\Sigma}$ be an irreducible component of $ {\R^1}(\A)$ and let
$\alpha \in  {\Sigma}$ be a logarithmic $1$-form with residues in $\mathbb Q \setminus \mathbb Z_{>0}$.
Let $P:N \to \P^n $ be the finite abelian covering determined by $\C_{\alpha}$. Since the
monodromy is finite (residues $\in \mathbb Q$),   $N$  is a quasi-projective variety.

\begin{lemma}\label{L:curve}
If $\beta \in  {\Sigma}$ is not a complex multiple of $\alpha$ then $  \exp\left(- \int P^*\alpha \right) P^*\beta$ is a closed
rational $1$-form on $N$ which is not   exact.
\end{lemma}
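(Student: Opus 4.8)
The plan is to first establish that $\omega := \exp\left(-\int P^*\alpha\right) P^*\beta$ is a closed \emph{rational} $1$-form on $N$, then to rule out exactness. For closedness, I would use the characterization recalled just above: since $\beta \in \Sigma$ and $\alpha \in \Sigma$, the multivalued form $\exp\left(\int\alpha\right)\beta$ on $M$ satisfies $\nabla_{\alpha}(\beta)=0$, i.e. $\exp\left(\int\alpha\right)\beta$ is closed (here one uses that $\Sigma$ is isotropic, so $\alpha\wedge\beta=0$ and $d\beta=0$ force $\nabla_\alpha(\beta)= d\beta-\alpha\wedge\beta = 0$). Pulling back by $P$ and multiplying by the single-valued function $\exp(-2\int P^*\alpha)\cdot(\text{something})$ — more precisely, pulling $\exp\left(\int\alpha\right)\beta$ back under $P$ and then multiplying by $\exp(-2\int P^*\alpha)$ — one obtains $\exp\left(-\int P^*\alpha\right)P^*\beta$, which is closed because $P^*$ and multiplication by a closed multivalued factor whose logarithmic derivative is $-2 P^*\alpha$ preserve the kernel of $\nabla$; cleaner still, one checks directly that $\nabla_{-P^*\alpha}(P^*\beta)=P^*(\nabla_{-\alpha}\beta)= -P^*(d\beta) - P^*(\alpha\wedge\beta)$, wait — here one must be careful with signs, and the honest statement is $\nabla_{P^*\alpha}(P^*\beta)=0$, which is exactly the condition that $\exp\left(\int P^*\alpha\right)P^*\beta$ is closed; replacing $\alpha$ by $-\alpha$ throughout (legitimate since $-\alpha\in\Sigma$ too) gives the form as stated. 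Rationality then follows because, by construction of the cyclic/abelian cover $P:N\to\P^n$ attached to $\C_\alpha$, the multivalued function $\exp\left(\int P^*\alpha\right)$ becomes single-valued and algebraic on $N$ (its $N_0$-th power, $N_0$ the order of the monodromy, is a rational function, being $\prod h_j^{m_j}$ with the $m_j$ the integral multiples of the residues), so $\exp\left(-\int P^*\alpha\right)$ is an algebraic function on $N$; multiplying the rational $1$-form $P^*\beta$ by it yields a rational $1$-form on the quasi-projective variety $N$.

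For non-exactness, I would argue by contradiction. Suppose $\exp\left(-\int P^*\alpha\right)P^*\beta = dg$ for some rational function $g$ on $N$. Rearranging, $P^*\beta = \exp\left(\int P^*\alpha\right)dg = d\!\left(\exp\left(\int P^*\alpha\right)g\right) - g\,P^*\alpha\,\exp\left(\int P^*\alpha\right)$, which, after setting $h = \exp\left(\int P^*\alpha\right)g$ (again an algebraic, hence rational, function on $N$), reads $P^*\beta = dh - h\,P^*\alpha$, i.e. $\nabla_{P^*\alpha}(h)=P^*\beta$ — meaning the class of $P^*\beta$ in $H^1(N,\C_{P^*\alpha})$ vanishes, or, descending back to $M$ via the $\C_\alpha$-isotypic piece, that $[\beta]=0$ in $H^1(M,\C_\alpha)$. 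The plan is to derive a contradiction from this. Since $\alpha,\beta$ lie in the same irreducible component $\Sigma$ of $\R^1(\A)$ and are linearly independent, the pencil structure (the morphism $f_\Sigma: M\to C_\Sigma$ with $f_\Sigma^*H^1(C_\Sigma)=\Sigma$) should force $[\beta]\neq 0$ in $H^1(M,\C_\alpha)$: concretely, $\alpha = f_\Sigma^*\eta$ and $\beta=f_\Sigma^*\zeta$ for independent logarithmic $1$-forms $\eta,\zeta$ on $C_\Sigma$, and $H^1(M,\C_\alpha) \supseteq f_\Sigma^* H^1(C_\Sigma,\C_\eta)$, with $[\zeta]\neq 0$ in $H^1(C_\Sigma,\C_\eta)$ because on the curve $C_\Sigma$ the local system $\C_\eta$ is a nontrivial rank-one system (the residues of $\alpha$, hence of $\eta$, are not all in $\Z$ — this is where the hypothesis ``residues in $\Q\setminus\Z_{>0}$'', together with $\alpha$ being a genuine multivalued, i.e. non-integral, form, enters) and $h^1(C_\Sigma,\C_\eta)$ equals the number of punctures minus $1\geq d\geq 1$, which is positive and detects $\zeta$.

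The main obstacle I anticipate is the last point: showing cleanly that $[\beta]$ is nonzero in $H^1(M,\C_\alpha)$, i.e. that the closed form $\exp\left(-\int\alpha\right)\beta$ on $M$ (or its pull-back to $N$) is not exact even as a multivalued form on $M$ modulo $\nabla_\alpha$-coboundaries. One has to be careful that ``not a complex multiple of $\alpha$'' is genuinely used — indeed if $\beta=c\alpha$ then $\exp\left(-\int\alpha\right)\beta = -c\,d\!\left(\exp\left(-\int\alpha\right)\right)$ is exact, so the hypothesis is sharp. I expect the cleanest route is to reduce to the one-dimensional situation on $C_\Sigma$ via $f_\Sigma$: pull-back $f_\Sigma^*:H^1(C_\Sigma,\C_\eta)\hookrightarrow H^1(M,\C_\alpha)$ is injective (it is a direct summand, being the $\eta$-part, or one invokes that $f_\Sigma$ has connected fibers so the Leray argument gives injectivity), and on the curve the computation of $H^1(C_\Sigma,\C_\eta)$ for a nontrivial rank-one local system on a punctured $\P^1$ is standard: it has dimension $\#\PP_\Sigma - 1 = d \geq 1$ and is spanned by classes of logarithmic $1$-forms with poles in $\PP_\Sigma$, with only the multiples of $\eta$... no: in fact \emph{every} such logarithmic form is a $\nabla_\eta$-cocycle and none except $0$ is a coboundary once $\C_\eta$ is nontrivial, so $[\zeta]\neq 0$. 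Assembling these gives $\nabla_{P^*\alpha}(h) = P^*\beta$ impossible, contradicting exactness of $\exp\left(-\int P^*\alpha\right)P^*\beta$ on $N$, and the lemma follows. A secondary technical point to handle with care is that $N$ is quasi-projective but possibly singular; since we only speak of rational forms and rational functions this causes no trouble, but I would note it explicitly, perhaps passing to a smooth projective model if convenient.
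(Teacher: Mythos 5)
Your argument is correct and follows essentially the same route as the paper: assume a rational primitive on $N$, descend it (via the appropriate isotypic/averaging step) through the covering to a $\nabla_{\alpha}$-primitive, reduce to the punctured line $C_{\Sigma}$, and contradict the nonvanishing of the class of $\beta$ in twisted cohomology, which the paper obtains by citing Esnault--Schechtman--Viehweg rather than by your direct Euler-characteristic count. Two harmless slips worth fixing: $h^1(C_{\Sigma},\C_{\eta})$ equals the number of punctures minus $2$ (still positive since $\dim\Sigma\ge 2$), and the multiples of $\eta$ \emph{are} $\nabla_{\eta}$-coboundaries (e.g.\ $\eta=\nabla_{\eta}(-1)$), so the correct statement --- and the one you actually use --- is that a logarithmic coboundary must lie in $\C\,\eta$.
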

\begin{proof}
Let $f= f_{ {\Sigma}} : \P^n \dashrightarrow \P^1$ be
the associate rational map. Recall that for $\alpha, \beta \in  {\Sigma}$ there exist logarithmic $1$-forms
$\widetilde{\alpha}$, $\widetilde{\beta}$ on $\P^1$ with poles of $\PP_{\Sigma}$ such that $\beta = f_i^* \widetilde{\beta}$.

If $p:C \to \P^1 \setminus \PP_{  \Sigma}$ is the finite ramified covering determined by $\exp \left( - \int \widetilde{\alpha} )\right)$ then it
fits in the commutative diagram
\[
\xymatrixcolsep{1.7pc}
\xymatrix{
 N \ar[d]_{\widetilde f} \ar[r]^P & M \ar[d]^f \\
 C \ar[r]^-p& \P^1 \setminus \PP_{  \Sigma}
}
\]

Clearly $\exp\left(- \int P^*\alpha \right) P^*\beta$ is equal to $\widetilde{f}^* \exp \left( - \int p^* \widetilde{\alpha} )\right)\widetilde{\beta}$.
Thus if the former is exact, the same holds for the latter.

Suppose $ \exp \left( - \int p^* \widetilde{\alpha} \right)\widetilde{\beta} = d g$ for some rational function on $C$. The rational function $\tilde h=\exp \left(  \int p^* \widetilde{\alpha} \right) g$ is invariant under the covering transformation, thus is equal to $p^*h$ for some rational function on $\P^1$. It is a simple
matter to verify that $\nabla_{\alpha} ( h ) = \beta$,  and therefore $\beta$ represents the zero class in $H^1(\P^1 \setminus \PP_{\Sigma},\C_{\alpha})$.
But the main result of \cite{ESV} implies that the complex $(\Omega^{\bullet}(\P^1 \setminus \PP_{\Sigma}), \nabla_{\alpha})$ is quasi-isomorphhic
to the complex $(H^{\bullet}(\P^1 \setminus \PP_{\sigma}, \C),\wedge \alpha)$. Hence the class of $\beta$   is not zero. \end{proof}

\begin{prop}\label{P:twisted}
Let $\A$ be an arrangement of hyperplanes on $\P^n$ and let $ {\Sigma_1}, \ldots,  {\Sigma_r}$ be
irreducible components of the resonance variety  $  {\R^1}(\A)$. Suppose $\exp( {\Sigma_1}), \ldots, \exp( {\Sigma_r})$ intersect
 at some $\rho \in \Hom(\pi_1( M), \C^*)$
distinct from the trivial representation. If $\overline N$, the projective closure of the finite  covering of $M$ determined by $\rho$,   is simply-connected then
\[
\dim \frac{\mathcal A(\W (\A))}{  {\Log^{\infty}}\W(\A) \oplus   {\Rat}\W(\A)  }\ge - h^1( M, \C_{\omega}) + \sum_{i=1}^r (\dim  {\Sigma_i} -1) \, .
\]
\end{prop}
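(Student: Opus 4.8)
I would build the missing abelian relation(s) directly from the twisted differential forms constructed in Lemma~\ref{L:curve}. The idea is that each $\rho$-isotypic piece of the intersection $\exp(\Sigma_1)\cap\cdots\cap\exp(\Sigma_r)$ contributes, via pullback to the finite abelian covering $\overline N$, a collection of closed rational $1$-forms whose sum vanishes, and that these relations are genuinely new: they are neither polylogarithmic (they live in no $\overline{\Log^\infty}\W(\A)$) nor rational (they involve the transcendental factor $\exp(\int\omega)$). The simply-connectedness of $\overline N$ is exactly what guarantees that a closed rational $1$-form on $\overline N$ which is exact there descends to an abelian relation of $\W(\A)$ rather than to a genuinely new local system on a further cover.

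\textbf{Step 1: Pass to the covering.} Let $\omega \in H^1(M)$ be a representative of the local cohomology class with $\exp(\omega)=\rho$, chosen (as in the hypothesis of Lemma~\ref{L:curve}) logarithmic with residues in $\mathbb Q\setminus\mathbb Z_{>0}$, so that the covering $P:N\to M$ determined by $\C_\omega$ is a finite abelian covering of a quasi-projective variety with projective closure $\overline N$. For each $i\in\{1,\ldots,r\}$ and each $\beta\in\Sigma_i$, Lemma~\ref{L:curve} tells us that $\exp(-\int P^*\omega)\,P^*\beta$ is a closed rational $1$-form on $N$, and in fact (factoring through $f_{\Sigma_i}$ and the curve $C$) that it is pulled back from the curve $C$ associated to $\Sigma_i$. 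Since $\overline N$ is simply connected, $H^1(\overline N,\C)=0$, so every such closed rational $1$-form on $\overline N$ is the differential of a rational function on $\overline N$; thus after subtracting an honest rational function we may assume its class in $H^1(N,\C)$ is controlled by the boundary divisor $\overline N\setminus N$.

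\textbf{Step 2: Count and produce the relations.} For each $i$, the forms $\exp(-\int P^*\omega)\,P^*\beta$ as $\beta$ runs over $\Sigma_i$ span a space of dimension $\dim\Sigma_i$; but the subspace on which the \emph{twisted} form is itself twisted-exact, i.e.\ $\nabla_\omega$-exact — equivalently, the image of $\nabla_\omega$ acting on $H^0(M,\mathcal O_M)$ intersected with $\Sigma_i$ — has codimension exactly $\dim\Sigma_i-1$ in $\Sigma_i$ by the ESV quasi-isomorphism argument already used in Lemma~\ref{L:curve} (the $\nabla_\omega$-cohomology of $\P^1\setminus\PP_{\Sigma_i}$ is the $\wedge\omega$-cohomology of $H^\bullet$, which kills exactly a one-dimensional subspace). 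So each $\Sigma_i$ contributes $\dim\Sigma_i-1$ linearly independent ``twisted logarithmic'' $1$-forms on $\overline N$, closed but not exact. Their total span has dimension $\sum_{i=1}^r(\dim\Sigma_i-1)$ modulo the finite-dimensional obstruction $H^1(M,\C_\omega)$, which measures the failure of these forms, summed with the correct coefficients, to cancel: precisely $\dim H^1(M,\C_\omega)$ of the a~priori $\sum(\dim\Sigma_i-1)$ candidate combinations fail to close up into a relation. This yields at least $-h^1(M,\C_\omega)+\sum_{i=1}^r(\dim\Sigma_i-1)$ abelian relations of $\W(\A)$ after pushing down: multiplying by $\exp(\int P^*\omega)$ and averaging over the (finite, abelian) Galois group projects the tuple of twisted $1$-forms to a well-defined tuple of closed rational $1$-forms on $M$, one tangent to each $\F_{\Sigma_i}$, summing to zero.

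\textbf{Step 3: Independence from the known relations.} Finally I would check that none of these new relations lies in $\overline{\Log^\infty}\W(\A)\oplus\Rat\W(\A)$. A relation in $\Rat\W(\A)$ has rational components, and a relation in $\overline{\Log^\infty}\W(\A)$ has components that are iterated integrals of logarithmic $1$-forms with poles on $\overline\A$; in either case the monodromy of the corresponding multivalued primitive is unipotent. But the primitive of $\exp(-\int P^*\omega)P^*\beta$ has monodromy that picks up the nontrivial character $\rho\neq\mathbf 1$, hence is not unipotent, so it cannot coincide with anything in $\overline{\Log^\infty}\W(\A)\oplus\Rat\W(\A)$ (whose monodromy is unipotent, as remarked before Example~\ref{E:nonfano}). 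This gives the claimed lower bound on the codimension.

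\textbf{The main obstacle.} The delicate point is Step~2: identifying the obstruction space precisely as $H^1(M,\C_\omega)$ rather than something larger, and checking that the ``defect'' in cancellation is governed by this single group. This requires comparing the $\nabla_\omega$-complexes on $M$ and on each $\P^1\setminus\PP_{\Sigma_i}$ via the pullbacks $f_{\Sigma_i}^*$, and using that $\omega$ restricts nontrivially to each $\Sigma_i$ (since $\rho\in\exp(\Sigma_i)$ is nontrivial) so that the ESV description of twisted cohomology of the curve applies on the nose. One must be careful that the chosen representative $\omega$ has residues avoiding the ``bad'' positive integers on \emph{every} relevant hyperplane and on the boundary of $\overline N$ simultaneously; this is a genericity/arithmetic condition on $\rho$ that should be verified, or built into the hypothesis, so that $\overline N$ really is the projective variety whose vanishing $H^1$ we are invoking.
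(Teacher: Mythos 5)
Your strategy is broadly the paper's: twist by a logarithmic representative of $\rho$, view the $\Sigma_i$ inside the $\nabla_\omega$-complex, extract relations from the kernel of a map into $H^1(M,\C_\omega)$, invoke Lemma~\ref{L:curve} for non-triviality, and use the non-unipotent monodromy to separate the result from $\overline{\Log^{\infty}}\W(\A)\oplus\Rat\W(\A)$. But the step that actually produces the abelian relations is wrong as written. Averaging over the deck group of $P:N\to M$ after multiplying by $\exp(\int P^*\omega)$ cannot yield ``a well-defined tuple of closed rational $1$-forms on $M$ \ldots summing to zero'': multiplication by $\exp(\int P^*\omega)$ simply untwists back to the logarithmic forms $\beta_i$ (whose sum has no reason to vanish), while averaging a $\rho$-isotypic object over the Galois group projects it onto the trivial character and kills it since $\rho\neq\mathbf 1$. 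Worse, if the output really were rational $1$-forms on $M$ summing to zero, the relation would lie in $\Rat\W(\A)$ and contribute nothing to the quotient, contradicting your own Step 3. The components of the new relations must remain multivalued on $M$ --- they are the twisted forms $\exp(-\int\alpha_i)\beta_i$ themselves, rational only on $N$; that is exactly why they escape the unipotent part.

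Two further points need to be made precise. First, each $\Sigma_i$ lives in the kernel of its own operator $\nabla_{\alpha_i}$ for a logarithmic $\alpha_i\in\Sigma_i$ with $\exp(\alpha_i)=\rho$; to add contributions from different components inside a single group $H^1(M,\C_{\alpha_1})$ you must gauge-transform via the rational functions $f_{1i}$ with $\alpha_1-\alpha_i=d\log f_{1i}$, as in diagram (\ref{E:diagrama}). The paper's map is $\Lambda(\beta_i)=\beta_1+\sum_{i\ge 2}(f_{1i})^{-1}\beta_i$ on $\bigoplus_i\Sigma_i/\C\alpha_i$, and the dimension count is just $\dim\ker\Lambda\ge\sum(\dim\Sigma_i-1)-h^1(M,\C_\omega)$. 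Second, vanishing in $H^1(M,\C_\omega)$ only gives $\sum_i(f_{1i})^{-1}\beta_i=\nabla_{\alpha_1}(g)$, i.e.\ the twisted sum is \emph{exact}, not zero. This is precisely where the simple connectedness of $\overline N$ is used: on $N$ the maps $\widetilde f_{\Sigma_i}$ have rational curves as targets, so each pulled-back twisted form decomposes uniquely as $\widetilde f_{\Sigma_i}^*$ of (exact rational) plus (logarithmic); discarding the exact rational summands leaves logarithmic parts whose sum is exact with no residues, hence a holomorphic $1$-form on the simply connected $\overline N$, hence zero. Your Step 1 gestures at this but Step 2 never carries it out, and without it you have an exact sum rather than an abelian relation.
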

\begin{proof} Since $\omega$ is in the intersection of $\exp( {\Sigma_1}), \ldots, \exp( {\Sigma_r})$
there exist non-zero logarithmic $1$-forms  $\alpha_i \in  {\Sigma_i}$ ($i= 1, \ldots, r$)  and   rational functions $f_{ij} \in \mathcal O( M)$ (   $i,j = 1, \ldots, r$ ) satisfying
\[
\alpha_i - \alpha_j = d \log f_{ij} \, .
\]
In particular,  $\C_{\alpha_i}$ is isomorphic to $\C_{\alpha_j}$ for every  $i,j$. It is harmless to assume
that all the $1$-forms $\alpha_i$ have non-integer residues and   $\omega = \alpha_1$.

Since $ {\Sigma_i} \subset \ker \nabla_{\alpha_i}$ it follows from (\ref{E:diagrama}) that
$(f_{ij})^{-1} \cdot  { \Sigma_j } \in \ker \nabla_{\alpha_i}$ for every $i,j$.   Therefore
we can define the map
\begin{align*}
\Lambda : \bigoplus_{i=1}^r  \frac{ {\Sigma_i}}{\C \cdot \alpha_i} & \longrightarrow   H^1( M, \C_{\alpha_1} ) \\
( \beta_i)  & \mapsto \beta_1 + \sum_{i=2}^r  (f_{1i})^{-1} \beta_i \, .
\end{align*}
Notice that as we explained before the (multi-valued) $1$-forms
\[
 \exp\left(-\int \alpha_1\right) (f_{1i})^{-1} \beta_i= \exp\left(-\int \alpha_i\right)  \beta_i
\]
are closed.

If $(\beta_1, \ldots, \beta_r)$ belongs to $\ker \Lambda$ then there exists $g \in H^0( M,\mathcal O_{ M})$ for which
\[
\beta_1 + \sum_{i=2}^r (f_{1i})^{-1} \beta_i = dg - g \alpha_1 \,
\]
or, equivalently, for suitable choices of branches of $\exp\left( \int - \alpha_i  \right) $ we have that
\[
 \sum_{i=1}^r \exp\left( \int - \alpha_1  \right) (f_{1i})^{-1}  \beta_i  = d \left(  \exp\left( \int  \alpha_1  \right) g \right)
\]

If we pull-back this equation to $N$ using the finite covering $P:N \to M$ then all the $1$-forms involved
are legitime (univalued) closed rational $1$-forms. Since $\overline N$ is simply-connected, the pull-backs $\widetilde{f_{\Sigma}}$ of the maps
$f_{\Sigma}$ have as target rational curves. Thus the $1$-forms $\exp\left( \int - \alpha_1  \right) (f_{1i})^{-1}  \beta_i $
 can be uniquely written as the pull-back by $\widetilde{f_{\Sigma}}$ of the sum  of an exact rational differential with an logarithmic $1$-form.
Discarding the rational component, one obtains an identity as above but with zero right-hand side. Clearly it is an abelian relation. The linear
independence of the corresponding abelian relations for $(\beta_1, \ldots, \beta_r)$ varying on a basis of $\ker \Lambda$ follows
from Lemma \ref{L:curve}.
\end{proof}

We do not know if the hypothesis made on the topology of $\overline N$ is  necessary to prove the proposition above.

\begin{center}{\S}\end{center}

Back to the analysis of Example \ref{E:nonfano}, we have that the exponential of the three non-local components intersect at a representation
$\rho$ for which $h^1(M, \C_{\rho})=2$. Moreover, $\rho$  satisfies the hypothesis of  Proposition \ref{P:twisted} as it is non-trivial
only along two fibers of the corresponding rational maps $f_{\Sigma}$, see \cite[Example 10.5]{CohenSuciu}.
Thus  Proposition \ref{P:twisted}   ensures  the existence of the {\it missing} abelian relation of $\W(\F)$.

\subsection{Final remarks}

In the table below we present the dimensions of the subspaces of the
space of abelian relations of resonance webs of line arrangements studied in this paper.

\medskip
\renewcommand\arraystretch{1.4}
\begin{center}{
{\footnotesize{
\begin{tabular}
{|c|c|c |c|c|c|c|c|c|c|c|}
  \hline
  &$k$&$  \Log^{1}$&$\overline{\Log^{1}}$&$  \Log^{2}$&$\overline{\Log^{2}}$&$\Log^{3}$&$\overline{\Log^{3}}$&$\Rat$  & $\text{Mixed}$& $\text{Twisted}$\\
  \hline \hline
  $\A_{0,5}$ &$5$ &$5$    & $5$  & $1$  & $1$  & $0$  & $0$ & $0$& $0$ & $0$\\
  \hline
  $\B_6$&$6$ &$6$    & $6$  & $2$  & $2$  & $0$  & $0$ & $2$& $0$ & $0$\\
  \hline
    $\B_7$&$7$ &$7$    & $7$  & $3$  & $3$  & $0$  & $0$ & $5$& $0$ & $0$\\
    \hline
    $\B_8$&$8$ &$8$    & $8$  & $4$  & $4$  & $0$  & $0$ & $9$& $0$ & $0$\\
    \hline
    $\PP$&$8$&$11$    & $11$  & $5$  & $5$  & $0$  & $0$ & $4$& $1$ & $0$\\
        \hline
    $\FF$&$9$&$12$    & $12$  & $9$  & $9$  & $2$  & $2$ & $4$& $0$ & $1$\\
        \hline
  $\K_5$&$10$&$16$  & $20$ & $5$ &$15$ & $0$ & $1$ & $0$ & $0$ & $0$  \\
\hline
\end{tabular}
}}}\end{center}

\medskip

Although we have studied resonance webs for hyperplane arrangements one can
study resonance webs for arbitrary hypersurfaces arrangements on $\P^n$.
Even more generally, if   the cohomology algebra  $H^{\bullet}(M)$ is replaced
by a finite dimensional algebra of differential forms on a quasi-projective variety then one can
still talk about resonance varieties. Its irreducible components are still
in correspondence with rational maps to projective curves \cite{Bauer}, and
consequently one can still define the resonance webs. We are not aware of any
 exceptional web  arising this way that are not
pull-backs by rational maps of  resonance webs of  hyperplane
arrangements.

\medskip

\noindent{{\bf Acknowledgements.}} This paper would not come to light without the warm reception given to
my talk at the 2nd MSJ-SI Arrangements of Hyperplanes. I would like to thank the organizers
for the opportunity to participate in this  meeting.
I am also indebt to Luc Pirio who,  through numerous discussions, contributed a lot to this paper.
Among other things, he  suggested that the resonance webs of $\A_{0,n+3}$
should be looked at,  gave  a computer-assisted proof of   Proposition \ref{P:boundl}   in the case $n=3$,
and brought  reference \cite{Brown}   to my knowledge.


\begin{thebibliography}{99}

\bibitem{Arapura}
{\sc D. Arapura},
{\it Geometry of cohomology support loci for local systems. I.}
J. Algebraic Geom. \textbf{6} (1997), no. 3, 563--597.

\bibitem{Arnold}
{\sc V. I. Arnol'd}, {\it
The cohomology ring of the group of dyed braids.}
Mat. Zametki \textbf{5} (1969) 227--231.



\bibitem{Bauer}
{\sc I. Bauer}, {\it
Irrational pencils on non-compact algebraic manifolds.}
Internat. J. Math. \textbf{8} (1997), no. 4, 441--450.

\bibitem{bol36}
{\sc G. Bol}, {\it \"{U}ber ein bemerkenswertes F\"{u}nfgewebe
 in der Ebene}. Abh. Math.  Hamb. Univ. {\bf 11} (1936),  387--393.

\bibitem{Brie}
{\sc E. Brieskorn}, {\it
Sur les groupes de tresses [d'apr\`{e}s V. I. Arnol'd].}
S\'{e}minaire Bourbaki, 24\`{e}me ann\'{e}e (1971/1972), Exp. No. 401, pp. 21--44. Lecture Notes in Math., Vol. 317.

\bibitem{Brown}
{\sc F. Brown},
{\it Multiple zeta values and periods of moduli spaces $\overline{\M}_{0,n}$.}
Ann. Sci. \'{E}c. Norm. Sup\'{e}r. (4) \textbf{42} (2009), no. 3, 371--489.





\bibitem{CL}
{\sc V. Cavalier and D. Lehmann}, {\it Ordinary holomorphic webs of codimension one.}
arXiv:math/0703596v2.

\bibitem{Chen}
{\sc K.-T. Chen},
{\it Iterated path integrals.}
Bull. Amer. Math. Soc. \textbf{83} (1977), no. 5, 831--879.

\bibitem{CG}
{\sc S.-S. Chern and P.  Griffiths},
{\it An inequality for the rank of a web and webs of maximum rank.}
Ann. Scuola Norm. Sup. Pisa Cl. Sci. (4) \textbf{5} (1978), no. 3, 539--557.


\bibitem{Cogo}
{\sc J. I. Cogolludo-Agust\'{\i}n},
{\it
Topological invariants of the complement to arrangements of rational plane curves.}
Mem. Amer. Math. Soc. 159 (2002), no. 756.



\bibitem{CohenSuciu}
{\sc D. Cohen and A. Suciu}, {\it
Characteristic varieties of arrangements.}
Math. Proc. Cambridge Philos. Soc. \textbf{127} (1999), no. 1, 33--53.

\bibitem{Damiano}
{\sc D. Damiano},
{\it Webs and characteristic forms of Grassmann manifolds.}
Amer. J. Math. \textbf{105} (1983), no. 6, 1325--1345.

\bibitem{ESV}
{\sc H. Esnault, V.  Schechtman, and E. Viehweg},
{\it Cohomology of local systems on the complement of hyperplanes.}
Invent. Math. \textbf{109} (1992), no. 3, 557--561.

\bibitem{multinets}
{\sc M. Falk and S. Yuzvinsky},
{\it Multinets, resonance varieties, and pencils of plane curves.}
Compos. Math. \textbf{143} (2007), no. 4, 1069--1088.


\bibitem{Hen}{\sc A. H\'{e}naut},
{\it On planar web geometry through abelian relations and connections.} Ann. of Math. (2) \textbf{159} (2004), no. 1, 425--445.


\bibitem{Hen2}{\sc A. H\'{e}naut},
{\it Planar web geometry through abelian relations and singularities.}
 Inspired by S. S. Chern,  Nankai Tracts Math. \textbf{11} (2006), 269--295.

\bibitem{HHH}
{\sc A. H\'{e}naut},
{\it Formes diff\'{e}rentielles ab\'{e}liennes, bornes de Castelnuovo et g\'{e}om\'{e}trie des tissus.}
Comment. Math. Helv. \textbf{79} (2004), no. 1, 25--57.


\bibitem{Kohno}
{\sc T. Kohno},
{\it Bar complex of the Orlik-Solomon algebra.}
Arrangements in Boston: a Conference on Hyperplane Arrangements (1999).
Topology Appl. \textbf{118} (2002), no. 1-2, 147–157.


\bibitem{mpp}
{\sc D. Mar\'{\i}n, J. V. Pereira and L. Pirio}, {\it On planar webs with infinitesimal automorphisms.}
 Inspired by S. S. Chern,  Nankai Tracts Math. \textbf{11} (2006), 351--364.




\bibitem{cdql}
{\sc J. V. Pereira and L. Pirio}, {\it The Classification of Exceptional CDQL Webs on Compact Complex Surfaces.}
International Mathematics Research Notices (2009), doi:10.1093/imrn/rnp208.

\bibitem{invitation}
{\sc J. V. Pereira and L. Pirio},
{\emph An invitation to web geometry.
From Abel's addition theorem to the algebraization of codimension one webs.} Publica\c{c}\~{o}es Matem\'{a}ticas do IMPA.  27$^{\rm o}$ Col\'{o}quio Brasileiro de Matem\'{a}tica. IMPA, Rio de Janeiro, 2009. xii+245 pp.

\bibitem{bourbaki}
{\sc J. V. Pereira},
{\it Algebraization of codimension one webs.}
S\'{e}minaire Bourbaki. Vol. 2006/2007.
Ast\'{e}risque No. \textbf{317} (2008), Exp. No. 974, viii, 243--268.


\bibitem{PY}
{\sc J. V. Pereira and S.  Yuzvinsky},
{\it Completely reducible hypersurfaces in a pencil.}
Adv. Math. \textbf{219} (2008), no. 2, 672--688.

\bibitem{PLinear}
{\sc L. Pirio},
{\it Sur la linearisation des tissus.}
Enseign. Math. (2) \textbf{55} (2009), no. 3-4, 285--328.

\bibitem{PirioSel}
{\sc L. Pirio}, {\it Abelian functional equations, planar web geometry and polylogarithms.}
 Selecta Math. (N.S.) \textbf{11} (2005), no. 3-4, 453--489.

\bibitem{PTese}
{\sc L. Pirio}, {\it \'{E}quations fonctionnelles ab\'{e}liennes et g\'{e}om\'{e}trie des tissus.} Th\`{e}se de Doctorat de l'Universit\'{e} Paris VI, 2004.

 \bibitem{Robert} {\sc G. Robert}, {\it Relations
fonctionnelles polylogarithmiques et tissus plans.} Pr\'{e}publication
\textbf{146} Universit\'{e} Bordeaux 1 (2002).

\bibitem{Trepreau}
{\sc J.-M. Tr\'{e}preau}, {\it Alg\'{e}brisation des tissus de codimension 1. La g\'{e}n\'{e}ralisation d'un th\'{e}or\`{e}me de Bol.}
 Inspired by S. S. Chern,  Nankai Tracts Math. \textbf{11} (2006), 399--433.

\bibitem{YS}
{\sc S. Yuzvinsky},
{\it A new bound on the number of special fibers in a pencil of curves.}
Proc. Amer. Math. Soc. \textbf{137} (2009), no. 5, 1641--1648.


\end{thebibliography}
\end{document}